\documentclass[12pt,a4paper]{article}

\usepackage{a4wide}
\usepackage{epsfig}
\usepackage{authblk}
\usepackage{amsmath}
\usepackage{amssymb}
\usepackage{graphicx}
\usepackage{amsthm,dsfont,amsfonts,fancyhdr}

\usepackage[running]{lineno}

\usepackage{pst-node}

\usepackage{tikz}
\usepackage{enumerate}

\usepackage[bookmarks=false,hyperfootnotes=false]{hyperref}

\usepackage{comment}

\newtheorem{propo}{Proposition}[section]

\newtheorem{lemma}[propo]{Lemma}

\newtheorem{theo}[propo]{Theorem}

\newtheorem{remar}[propo]{Remark}

\newtheorem{prob}[propo]{Problem}

\newtheorem{cor}[propo]{Corollary}
\newtheorem{prop}[propo]{Proposition}
\newtheorem{rem}[propo]{Remark}

\newcommand{\bl}{\begin{lemma}}
\newcommand{\el}{\end{lemma}}

\makeatletter
\newcommand\blfootnote[1]{%
  \begingroup
  \renewcommand\thefootnote{}%
  \protected@xdef\@thefnmark{}
  \Hy@raisedlink{\footnotetext{#1}}
  \endgroup
}
\makeatother

\usepackage{indentfirst,latexsym,bm}

\def\K{{\rm K}}

\begin{document}

\title{Amply regular graphs with $\mu$ close to half the valency and group divisible designs}

\author[a,b]{Wei Jin}
\author[c,d]{Jack H. Koolen\footnote{Jack H. Koolen is the corresponding author.}}
\author[c]{Chenhui Lv}

\affil[a]{\footnotesize{School of Mathematics and Computational Science, Key Laboratory of Intelligent Computing and Information Processing of Ministry of Education, Xiangtan University, Xiangtan, Hunan, 411105, P.R.China}}

\affil[b]{\footnotesize{School of Statistics and Data Science, 
Jiangxi University of Finance and Economics, Nanchang, Jiangxi, 330013, P.R.China}}

\affil[c]{\footnotesize{School of Mathematical Sciences, University of Science and Technology of China, Hefei, Anhui, 230026, P.R.China}}

\affil[d]{\footnotesize{CAS Wu Wen-Tsun Key Laboratory of Mathematics, University of Science and Technology of China, Hefei, Anhui, 230026, P.R.China}}


\maketitle

\blfootnote{\small E-mail addresses: {\tt jinweipei82@163.com} (W. Jin), {\tt koolen@ustc.edu.cn} (J.H. Koolen), {\tt lch1994@mail.ustc.edu.cn} (C. Lv)}

\begin{abstract}
In this paper, we classify connected amply regular graphs with diameter $d \geq 4$ and parameters $(v, k, \lambda, \mu)$ satisfying $\mu = \frac{k-1}{2}$, where $k\geq 5$ is odd. 
We prove that such a graph must be exactly one of the following: the $5$-cube, the graph $\K_2 \square \Lambda$, where $\Lambda$ is the unique bipartite $(0,2)$-graph on $14$ vertices, or the point--block incidence graph of a group divisible design with the dual property, namely a $GDDDP\left(2, k+1;\, k;\, 0, \frac{k-1}{2}\right)$.
For the last family, we give equivalent characterizations in terms of bipartite $Q$-regular graphs and relation graphs of symmetric association schemes with five classes.  
Furthermore, we present constructions of such amply regular graphs, yielding infinite families of examples derived from Paley graphs, Peisert graphs, and Paley digraphs.
\end{abstract}

\vspace{2mm}

 \hspace{-17pt}{\bf Keywords:} amply regular graph, group divisible design, association scheme.

 \hspace{-17pt}\textbf{Mathematics Subject Classification (2020):} 05E30, 	05B05

\section{Introduction}
An \emph{amply regular graph} with parameters $(v, k, \lambda, \mu)$ is a $k$-regular graph on $v$ vertices such that every pair of adjacent vertices has exactly $\lambda$ common neighbors, and every pair of vertices at distance two has exactly $\mu$ common neighbors.

By~\cite[Theorem~1.9.3]{BCN}, for an amply regular graph with diameter $d \geq 4$ we have $\mu \leq \frac{k}{2}$, with equality if and only if the graph is a polygon or a Hadamard graph. 
For amply regular graphs with diameter $3$, however, the above bound does not hold. A counterexample is the distance-$2$ graph of the Gosset graph, which is a distance-regular graph with intersection array $\{27,16,1;1,16,27\}$.

For the case where the diameter is $3$,~\cite{BCN} posed the following question.
\begin{prob}[cf.~{\cite[p.~178]{BCN}}]
Is it true that for a distance-regular graph with $d \geq 3$ and $a_2>0$, we have 
$\mu \leq a_2 + \frac{\lambda}{2}$, except for the distance-$2$ graph of the Gosset graph whose intersection array is $\{27,16,1;1,16,27\}$? 
\end{prob}

For an upper bound on $\lambda$, Lemma~\ref{b1geqk} shows that if $d \geq 3$, then $\lambda \leq \frac{2}{3}(k-2)$.

In this paper, we provide a classification of amply regular graphs with diameter $d \geq 4$ and parameters $\mu=\frac{k-1}{2}$. Except for two exceptional graphs, all such graphs are point--block incidence graphs of certain group divisible designs.
We further present two constructions of such graphs.

Our interest in studying this class of graphs arises from another work \cite{JKL-2025-2}, in which we characterize $2$-distance-transitive graphs of small valency that are locally primitive. 
In that context, it is necessary to show that every amply regular graph with diameter $4$ and parameters $k = 7$ and $\mu = 3$ has $32$ vertices, which we prove in Corollary~\ref{valency7}. 
During the analysis of this case, we observed that the method extends naturally to the more general setting of amply regular graphs with diameter $d \geq 4$ and $\mu = \frac{k - 1}{2}$.

Let $\K_2$ denote the complete graph on two vertices (that is, a single edge), and $\square$ denotes the Cartesian product of graphs. 
Our main result is the following.

\begin{theo}\label{ARmain}
Let $\Gamma$ be a connected amply regular graph with diameter $d \geq 4$ and parameters $(v, k, \lambda, \mu)$, where $\mu = \frac{k - 1}{2}$ and $k \geq 5$ is odd.  
Let $\Lambda$ be the unique bipartite $(0,2)$-graph with $14$ vertices and valency $4$, which is the point--block incidence graph of the square $2$-$(7,4,2)$ design.
Then exactly one of the following holds:
\begin{itemize}
\item[(1)] $\Gamma$ is the $5$-cube.
\item[(2)] $\Gamma$ is the graph $\K_2 \square \Lambda$, which has diameter $4$ and parameters $(28, 5, 0, 2)$.
\item[(3)] $\Gamma$ is the point--block incidence graph of a $GDDDP\left(2, k+1;\, k;\, 0, \frac{k-1}{2}\right)$.
\end{itemize}
\end{theo}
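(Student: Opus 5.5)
We plan to follow the standard proof of \cite[Theorem~1.9.3]{BCN} and keep track of the slack caused by $\mu=\frac{k-1}{2}$ instead of $\frac{k}{2}$. Fix a vertex $x$, and write $\Gamma_i(x)$ for the set of vertices at distance $i$ from $x$. Take $z$ at distance $4$ from $x$ (this exists since $d\ge 4$), and consider a path $x\sim y_1\sim y_2\sim y_3\sim z$. We will count edges between $\Gamma(x)\cap\Gamma_2(y_2)$ and $\Gamma(z)\cap \Gamma_2(y_2)$, that is, apply the $\mu$-condition inside the $\mu$-graphs $\Gamma(x)\cap\Gamma(w)$ for $w\in\Gamma_2(x)$.

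\textbf{Step 1: show $\lambda=0$.} We first bound $\lambda$ using Lemma~\ref{b1geqk} and a counting argument: in the BCN proof, $\mu\le k/2$ follows from the fact that two vertices $u,w$ at distance $2$ with a common neighbour have at least $\mu$ common neighbours, inside a ball where $u,w$ see disjoint halves. With $\lambda>0$ the counting gives $2\mu+\lambda\le k$ or a similar bound, and so $\lambda\le 1$. The case $\lambda=1$ then has to be excluded using parity, since $k$ is odd and the local graph is a perfect matching, which forces $k$ even. Hence $\lambda=0$, i.e.\ $\Gamma$ is triangle-free with $\mu=\frac{k-1}{2}$.

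\textbf{Step 2: local structure.} For $w\in\Gamma_2(x)$ we have $c_2=\mu=\frac{k-1}{2}$. The BCN argument shows that for a vertex at distance $3$ the number of neighbours back in $\Gamma_2(x)$ is at least $k-\mu+1$, roughly. Combining this with the existence of distance $4$, we will show that $\Gamma$ is nearly distance-regular near $x$: each vertex at distance $3$ has $c_3\in\{k-1,k\}$ \emph{or} at most $a$ deviation. In the extremal case this forces either antipodal/bipartite behaviour. In particular we expect to prove that $\Gamma$ is bipartite, or else that $\Gamma$ is $\K_2\square\Lambda$ or the $5$-cube. The latter two arise when $k=5$, $\mu=2$, which are the $(0,2)$-graph cases; these are covered by the classification of rectagraphs of small valency (for instance Brouwer's list of $(0,2)$-graphs with valency $5$ and diameter $\ge4$).

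\textbf{Step 3: the bipartite case gives a GDD.} Assume $\Gamma$ is bipartite with $d=4$ (we will show $d=4$ by a counting argument that $k_4=1$ per point, so vertices at distance $4$ form the ``groups''). Let the points be one colour class and the blocks the other. Two points are at distance $2$ (sharing $\mu=\frac{k-1}{2}$ blocks) or at distance $4$ (sharing $0$ blocks). We then show that ``being at distance $4$ or equal'' is an equivalence relation with classes of size $2$ ($1+k_4=2$), and that the same holds dually for blocks. This yields a $GDDDP\left(2,k+1;k;0,\frac{k-1}{2}\right)$: the group size is $2$, there are $k+1$ groups, the block size is $k$, and $\lambda_1=0$, $\lambda_2=\frac{k-1}{2}$. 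The number of groups comes from $v/2$ points: counting gives $1+k+k(k-1)/\mu+\dots$, which leads to $2(k+1)$ points. Conversely, the incidence graph of such a design is amply regular with the required parameters.

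\textbf{Main obstacle.} The hardest step is Step 2: showing that $k_4(x)=1$, and hence bipartiteness and $d=4$, for $k\ge7$, while isolating the two exceptional $k=5$ graphs. My first approach is to count paths of length $2$ from $\Gamma_3(x)$ into $\Gamma_2(x)$ in two ways, using $\mu$-graphs. I expect this to give $\sum_{z\in\Gamma_3(x)}(c(z)-\mu-1)\ge 0$ with equality exactly in the design case, and to leave finite leftover configurations only at $k=5$.
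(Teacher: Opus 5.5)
\textbf{Step 1 has a gap.} The inequality $2\mu+\lambda\le k$ does not come out of the counting you describe. Take a vertex $v$ with $d(x,v)=d(z,v)=2$ and $d(x,z)=4$. The count gives $\Gamma(v)=\Gamma(x,v)\cup\Gamma(z,v)\cup\{v'\}$, with no edges between $\Gamma(x,v)$ and $\Gamma(z,v)$. But when $\lambda>0$, the $\lambda$ neighbours of $u\in\Gamma(x,v)$ inside $\Gamma(v)$ may lie in $\Gamma(x,v)$ itself. So this only yields $\lambda\le\mu$, not $\lambda\le 1$.

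The configuration actually gives a \emph{lower} bound, which is how the paper argues:
\begin{itemize}
\item First, $\lambda\le\mu-1$, since otherwise $k\le\lambda+\mu+1$, contradicting \cite[Theorem~1.5.5]{BCN}.
\item Next, fix $y\in\Gamma(x)\cap\Gamma_3(z)$, $w\in\Gamma_2(y)\cap\Gamma(z)$ and $y'\in\Gamma(x,y)$. Because $\lambda\le\mu-1$, some $v\in\Gamma(y,w)$ is not adjacent to $y'$.
\item Then $y\sim v'$ and $\Gamma(y',v)\subseteq\Gamma(x,v)\cup\{v'\}$. Hence $y'$ has at least $\mu-1$ neighbours in $\Gamma(x)$, so $\lambda\ge\mu-1$.
\end{itemize}
The surviving case is $\lambda=\mu-1=\frac{k-3}{2}$. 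Your parity remark only excludes odd $\lambda$; it does not rule out, say, $\lambda=2$ at $k=7$. What excludes this case is counting edges between $\Gamma(x)$ and $\Gamma_2(x)$: this gives $k(k+1)=(k-1)k_2$, which is impossible for $k\ge 5$.

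\textbf{Step 2 contains no actual argument, and its framing is off.} The $5$-cube and $\K_2\square\Lambda$ are themselves bipartite, so the real dichotomy is not ``bipartite versus exceptional''. Three ingredients are missing.
\begin{itemize}
\item \emph{Non-bipartite case.} Once $\lambda=0$, pass to the bipartite double, which is amply regular with the same $k,\lambda,\mu$ (\cite[Theorem~1.11.1]{BCN}). If $d(x,y)=4$ in $\Gamma$, then $d(x^+,y^-)\ge 5$ in the double. So a non-bipartite $\Gamma$ of diameter $4$ would have a double of diameter $5$, hence the $5$-cube, which is excluded.
\item \emph{Diameter.} You never bound it: $d\le k-2\mu+4=5$. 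You also need to rule out $d=5$ for all $k\ge 7$. The paper does this with a Terwilliger-type inequality $c_3(w_2,w_5)+c_3(z,w_1)\ge 3\mu$, which gives $c_4\ge k-2$. From this one gets $k_5=1$ and distance-regularity. The folded graph is then strongly regular, and its complement has smallest eigenvalue $-2$. That forces the Clebsch graph and $k=5$.
\item \emph{Diameter $4$: showing $\alpha=|\Gamma_4(x)|=1$.} Here $k_2=2k$ and $k_3=k+\alpha+1\le 2k$. The Brouwer--Koolen common-neighbour lemma applied to $(\Gamma_3(x),\Gamma_4(x))$ forces $\alpha\ge k-5$. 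The finitely many survivors are then eliminated one by one:
\begin{itemize}
\item $\alpha=k-1$, via the equality case of the convexity count;
\item $k=5$, via Brouwer's table;
\item $k=7$ with $\alpha\in\{2,5\}$, by direct counting.
\end{itemize}
\end{itemize}

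Your planned heuristic is also miscalibrated. The correct general bound is $c_3\ge\mu+1=k-\mu$, not $k-\mu+1$. In the design case every $z\in\Gamma_3(x)$ has $c(z)\in\{k-1,k\}$, so $\sum_z\bigl(c(z)-\mu-1\bigr)$ is strictly positive there, not zero. Step 3 is fine once $|\Gamma_4(u)|=1$ is known for every vertex $u$.
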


\begin{remar}
\begin{itemize}
    \item[(1)]
    For a graph $\Gamma$ satisfying Theorem~\ref{ARmain}(3), we give equivalent characterizations in Theorem~\ref{thm:equivalent}. On the one hand, $\Gamma$ is a bipartite $Q$-regular graph with distribution diagram shown in Figure~\ref{fig:D41}. On the other hand, $\Gamma$ is a relation graph with respect to a relation $R$ of a symmetric association scheme $(X, \mathcal{R})$ with $5$ classes, such that the distribution diagram of $(X, \mathcal{R})$ with respect to the relation $R$ is as in Figure~\ref{fig:D41}.
\item[(2)]
    In Section~\ref{4-constructions}, we present two constructions of amply regular graphs with diameter $d = 4$ and parameters $(4n+4, n, 0, \frac{n - 1}{2})$. We further apply these two constructions to obtain three infinite families of examples arising from the Paley graphs, the Peisert graphs, and the Paley digraphs, respectively.
By Theorem~\ref{ARmain}, all the resulting graphs are the point--block incidence graphs of a $GDDDP\left(2, q+1;\, q;\, 0, \frac{q-1}{2}\right)$, where $q$ is a prime power such that $q \equiv 1 \pmod{4}$ or $q \equiv 3 \pmod{4}$.
It remains unclear whether these two constructions yield all such $GDDDP\left(2, q+1;\, q;\, 0, \frac{q-1}{2}\right)$. We also do not know whether these designs are new.
\end{itemize}
\end{remar}

\begin{figure}[ht]
    \centering
    \includegraphics[width=\textwidth]{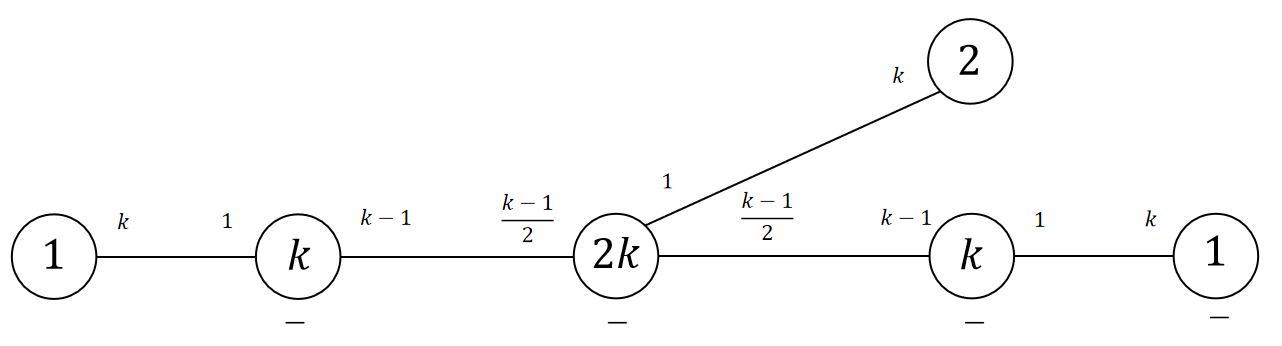} 
    \caption{The distribution diagram}
        \label{fig:D41}
\end{figure}

Koolen and Park~\cite{KP2012DCC} showed that distance-regular graphs with valency $k \geq 3$, diameter $d \geq 4$, and $c_2 > \frac{k}{3}$ are either Hadamard graphs or the $5$-cube.
Qiao, Park and Koolen~\cite{QPK2019EUJC} generalized this result by showing that a $2$-walk-regular graph with valency $k \geq 3$, diameter $d \geq 4$, and $c_2 > \frac{k}{3}$ is either a Hadamard graph, the point--block incidence graph of a $GDDDP\left(2, m;\, k;\, 0, \lambda_2\right)$, or the $5$-cube.
We believe that, apart from the Hadamard graphs and the bipartite amply regular graphs with $\mu = \frac{k-1}{2}$, there are only finitely many amply regular graphs with valency $k \geq 3$, diameter $d \geq 4$, and $\mu > \frac{k}{3}$.

Outline of the paper: In the next section, we provide preliminaries and definitions. In Section~\ref{4-constructions}, we present two constructions of amply regular graphs with diameter $d \geq 4$ and $\mu = \frac{k - 1}{2}$. We further apply these two constructions to obtain three families of examples arising from the Paley graphs, the Peisert graphs, and the Paley digraphs, respectively. For a graph $\Gamma$ satisfying Theorem~\ref{ARmain}(3), we give equivalent characterizations in Theorem~\ref{thm:equivalent}.
In Section~\ref{sec:main}, we characterize amply regular graphs with diameter $d \geq 4$ and $\mu = \frac{k - 1}{2}$, and prove Theorem~\ref{ARmain}.

\section{Preliminaries and definitions}\label{Preliminaries}

\subsection{Basic definitions in graph theory}
For a graph $\Gamma$, we denote its \emph{vertex set} by $V(\Gamma)$, its \emph{edge set} by $E(\Gamma)$. The \emph{order} of $\Gamma$ is the number of its vertices, i.e., $|V(\Gamma)|$. For two vertices $x, y \in V(\Gamma)$, if there is an edge between them, we say that $x$ is adjacent to $y$, or equivalently, that $x$ and $y$ are neighbors; this is denoted by $x \sim y$.

The \emph{distance} $d(x, y) = d_\Gamma(x, y)$ between two vertices $x, y \in V(\Gamma)$ is the length of a shortest path connecting them in $\Gamma$. The \emph{diameter} $d = d(\Gamma)$ of $\Gamma$ is the maximum distance among all pairs of vertices. For each $x \in V(\Gamma)$ and $0 \leq i \leq d$, let $\Gamma_i(x)$ denote the set of vertices at distance $i$ from $x$. Additionally, we define $\Gamma_{-1}(x) = \Gamma_{d+1}(x) = \emptyset$. For convenience, we abbreviate $\Gamma_1(x)$ as $\Gamma(x)$. The \emph{valency} of a vertex $x$ in $\Gamma$ is the cardinality $d(x) = d_\Gamma(x) := |\Gamma(x)|$. A graph $\Gamma$ is called \emph{regular} of valency $k$ if $|\Gamma(x)| = k$ for all $x \in V(\Gamma)$. For each pair of vertices $x, y \in V(\Gamma)$, we denote $\Gamma(x, y) := \Gamma(x) \cap \Gamma(y)$.

For a pair of vertices $x, y \in V(\Gamma)$ at distance $i$ (with $0 \leq i \leq d$), define the vertex sets $C_i(x, y) := \Gamma_{i-1}(x) \cap \Gamma(y)$, $A_i(x, y) := \Gamma_i(x) \cap \Gamma(y)$, and $B_i(x, y) := \Gamma_{i+1}(x) \cap \Gamma(y)$. Let $c_i(x, y) := |C_i(x, y)|$, $a_i(x, y) := |A_i(x, y)|$, and $b_i(x, y) := |B_i(x, y)|$.

Let $\K_m$ denote the \emph{complete graph} on $m$ vertices. A \emph{clique} of a graph $\Gamma$ is a set of mutually adjacent vertices of $\Gamma$.

Given two graphs $\Gamma$ and $\Delta$, the \emph{Cartesian product} $\Gamma \square \Delta$ is the graph whose vertex set is $V(\Gamma) \times V(\Delta)$, where two vertices $(x, y)$ and $(x', y')$ are adjacent if and only if either $x = x'$ and $y \sim y'$, or $x \sim x'$ and $y = y'$.

Let $\Gamma$ be a connected graph of diameter $d$. We define $\Gamma_d$ to be the graph on the same vertex set as $\Gamma$, where two vertices are adjacent if and only if they are at distance $d$ in $\Gamma$. The graph $\Gamma$ is called \emph{antipodal} if $d \geq 2$ and $\Gamma_d$ is a disjoint union of cliques. 
These cliques are called the \emph{fibres} of $\Gamma$. From an antipodal graph $\Gamma$, we may construct a smaller graph, called its \emph{folded graph} $\overline{\Gamma}$. 
The vertices of $\overline{\Gamma}$ are the fibres of $\Gamma$, and two distinct fibres are adjacent whenever there exists an edge in $\Gamma$ joining a vertex in one fibre to a vertex in the other.

The \emph{bipartite double} of a graph $\Gamma$ is defined as the graph with vertex set $V(\Gamma) \times \{0, 1\}$, where two vertices $(x, i)$ and $(y, j)$ are adjacent if and only if $x \sim y$ in $\Gamma$ and $i \neq j$. For simplicity, we write $(x, 0)$ as $x^+$ and $(x, 1)$ as $x^-$.

\subsection{Equitable partitions and distribution diagrams}
A partition $\mathcal{B} = \{B_1, B_2, \ldots, B_n\}$ of $V(\Gamma)$ is \emph{equitable} if there exist non-negative integers $q_{ij}$ ($1 \leq i, j \leq n$) such that every vertex in $B_i$ has exactly $q_{ij}$ neighbors in $B_j$. The matrix $Q = (q_{ij})_{1 \leq i,j \leq n}$ is called the \emph{quotient matrix} of $\mathcal{B}$. When $\mathcal{B}$ is equitable, the \emph{distribution diagram} of $\Gamma$ with respect to $\mathcal{B}$ is a diagram in which each block $B_i$ is represented by a balloon. We draw a line from the balloon representing $B_i$ to that of $B_j$ if $q_{ij} > 0$, placing the number $q_{ij}$ above the line near $B_i$. Inside the balloon we write $p_i := |B_i|$, and below it we write $q_{ii}$. If $q_{ii} = 0$, we write `$-$' instead of $0$.
Since there is a clear one-to-one correspondence between the quotient matrix and the distribution diagram, we will use the two terms interchangeably.

Let $\Gamma$ be a graph, and let $Q$ be the quotient matrix (equivalently, the distribution diagram) of an equitable partition of $\Gamma$ in which one of the parts has cardinality $1$.
We say that $\Gamma$ is \emph{$Q$-regular} if, for each vertex $x \in V(\Gamma)$, there exists an equitable partition $\mathcal{B}$ with quotient matrix $Q$ such that $\{x\}$ is a part of $\mathcal{B}$. 
Natural examples of $Q$-regular graphs include vertex-transitive graphs and distance-regular graphs. In the latter case, the quotient matrix $Q$ can be chosen as the intersection matrix.

Even if $\mathcal{B}$ is not an equitable partition of $V(\Gamma)$, or when its equitability is not known, we may still refer to the distribution diagram of $\Gamma$ with respect to $\mathcal{B}$ for convenience. However, in such cases, we include $q_{ij}$ values in the diagram only when they are well-defined.

\subsection{Amply regular graphs and distance-regular graphs}
An \emph{amply regular graph} with parameters $(v, k, \lambda, \mu)$ is a $k$-regular graph on $v$ vertices such that every pair of adjacent vertices has exactly $\lambda$ common neighbors, and every pair of vertices at distance two has exactly $\mu$ common neighbors.

A \emph{strongly regular graph} with parameters $(v, k, \lambda, \mu)$ is a regular graph of order $v$ and valency $k$ such that every pair of adjacent vertices has exactly $\lambda$ common neighbors, and every pair of non-adjacent vertices has exactly $\mu$ common neighbors.  For further properties of strongly regular graphs, see~\cite{BM-2022} and~\cite[Chapter~10]{GR}.

A \emph{sesqui-regular graph} with parameters $(v, k, \mu)$ is a regular graph of order $v$ and valency $k$ such that every pair of vertices at distance $2$ has exactly $\mu$ common neighbors. 

A \emph{$(0,2)$-graph} is a connected graph in which each pair of distinct vertices have either $0$ or $2$ common neighbours.
According to~\cite[Table~1]{Brouwer-2006}, there exists a unique bipartite $(0,2)$-graph with $14$ vertices and valency $4$, namely the point--block incidence graph of a square $2$-$(7,4,2)$ design.
Throughout this paper, we denote this $(0,2)$-graph by $\Lambda$.

A connected graph $\Gamma$ with diameter $d$ is called \emph{distance-regular} if there exist integers $b_i$ and $c_i$ for $0 \leq i \leq d$ such that, for each pair of vertices $x, y \in V(\Gamma)$ with $d(x, y) = i$, the vertex $y$ has exactly $c_i$ neighbors in $\Gamma_{i-1}(x)$ and $b_i$ neighbors in $\Gamma_{i+1}(x)$, where $b_d = c_0 = 0$ (cf.~\cite[p.126]{BCN}).  In this case, the set $C_i(x, y)$ contains exactly $c_i$ vertices for $1 \leq i \leq d$, and $B_i(x, y)$ contains exactly $b_i$ vertices for $0 \leq i \leq d-1$. In particular, a distance-regular graph  is regular of valency $k := b_0$, and for notational convenience, we define $a_i := k - b_i - c_i$. The constants $a_i$, $b_i$, and $c_i$ are called the \emph{intersection numbers}. The array $(b_0 = k, b_1, \dots, b_{d-1}; c_1, \dots, c_d)$ is referred to as the \emph{intersection array}.

Note that a distance-regular graph with diameter $2$ is exactly a strongly regular graph, and every distance-regular graph is, in particular, an amply regular graph with parameters $(v, b_0, a_1, c_2)$. In this paper, we adopt the notation $k$, $\lambda$, and $\mu$ when discussing strongly regular and amply regular graphs, rather than using the intersection number notation $a_i$, $b_i$, and $c_i$.

A distance-regular graph with diameter $d$ has exactly $d+1$ distinct eigenvalues, which can be computed directly from the intersection numbers; these are precisely the eigenvalues of the corresponding \emph{intersection matrix}. For more details, see~\cite[p.~128]{BCN}.

The following lemma gives constraints on the parameters of amply regular graphs.
It was proved for distance-regular graphs in~\cite[Lemma~3.1]{BK-2009}; see also~\cite[Lemma~3]{KP-2012}.
Here we replace distance-regular graphs by amply regular graphs, since the same argument applies. For completeness, we include the proof below, which follows that of~\cite[Lemma~3.1]{BK-2009}.

\begin{lemma} [{cf.~\cite[Lemma~3.1]{BK-2009}}]\label{b1geqk} 
Let $\Gamma$ be a connected amply regular graph with parameters $(v,k,\lambda,\mu)$
and diameter $d \geq 3$. Let $b_1 = k - \lambda - 1$.
Then $b_1 \geq \frac{1}{3}(k+1)$ holds.
\end{lemma}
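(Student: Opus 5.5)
We prove the inequality by a counting argument around a shortest path of length $3$. Since $d \geq 3$, choose vertices $x,z$ with $d(x,z)=3$ and a path $x \sim y \sim w \sim z$. Then $d(x,w)=2$ and $d(y,z)=2$. For adjacent vertices $a \sim b$, the set $\Gamma(b)\setminus(\{a\}\cup\Gamma(a))$ has exactly $k-1-\lambda = b_1$ elements, and we use this repeatedly.

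First we show $\mu \le b_1$. The set $\Gamma(x)\cap\Gamma(w)$ has exactly $\mu$ elements, because $d(x,w)=2$. No vertex $u$ of this set equals $z$ or is adjacent to $z$: otherwise $d(x,z)\le 2$. Hence $\Gamma(x)\cap\Gamma(w)\subseteq \Gamma(w)\setminus(\{z\}\cup\Gamma(z))$. Since $w\sim z$, this set has size $b_1$, so $\mu\le b_1$.

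Next we cover $\Gamma(y)$ by three sets:
\[
A=\Gamma(y)\setminus(\{x\}\cup\Gamma(x)),\qquad B=\Gamma(y)\setminus(\{w\}\cup\Gamma(w)),\qquad C=\Gamma(y)\cap\Gamma(x)\cap\Gamma(w).
\]
Since $y\sim x$ and $y\sim w$, we have $|A|=|B|=b_1$. Take any $u\in\Gamma(y)$ lying in neither $A$ nor $B$. Then $u\in\{x\}\cup\Gamma(x)$ and $u\in\{w\}\cup\Gamma(w)$. Now $u=x$ is impossible, since then $x\in\Gamma(w)$, contradicting $d(x,w)=2$. Symmetrically $u=w$ is impossible. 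Hence $u\in C$, and so $\Gamma(y)\subseteq A\cup B\cup C$.

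The set $\Gamma(x)\cap\Gamma(w)$ has size $\mu$ and contains $y$ itself, while $y\notin\Gamma(y)$. Therefore $|C|\le\mu-1$. Combining, $k\le 2b_1+\mu-1\le 3b_1-1$, which gives $b_1\ge\frac13(k+1)$.

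The only delicate step is the $-1$. It comes from noticing that $y$ is one of the $\mu$ common neighbours of $x$ and $w$ but is not a neighbour of itself. Without it one only obtains $k\le 3b_1$.
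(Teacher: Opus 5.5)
Your proof is correct, and it takes a different route from the paper's.

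The paper works on the \emph{middle} edge $x_2x_3$ of a geodesic $x_1\sim x_2\sim x_3\sim x_4$. Each of the $\lambda$ common neighbours of $x_2,x_3$ must be non-adjacent to $x_1$ or to $x_4$. Each of these two classes has at most $b_1-1$ members, since the $b_1$ vertices of $\Gamma(x_2)\setminus(\{x_1\}\cup\Gamma(x_1))$ include $x_3$, and symmetrically for $x_4$. This gives $\lambda\le 2b_1-2$ in a single count, and $\mu$ never appears.

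You instead split the argument into two inequalities. Your cover $\Gamma(y)\subseteq A\cup B\cup C$ gives $k\le 2b_1+\mu-1$, i.e.\ $\lambda\le b_1+\mu-2$. This needs only an induced path $x\sim y\sim w$, so it already holds in diameter $2$. The hypothesis $d\ge 3$ then enters only through $\mu\le b_1$.

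What each approach buys:
\begin{itemize}
\item The paper's version is more economical: one count, with no detour through $\mu$.
\item Yours isolates exactly where $d\ge 3$ is used. It also yields the by-product $b_1\ge\frac{1}{2}(k+1-\mu)$ for any diameter $\ge 2$, which is stronger than $\frac{1}{3}(k+1)$ whenever $\mu<\frac{1}{3}(k+1)$.
\item Your direct chain $k\le 2b_1+\mu-1\le 3b_1-1$ lands exactly on $3b_1\ge k+1$. The paper phrases its argument as a contrapositive starting from ``$b_1\le\frac{1}{3}(k+1)-1$'', which literally excludes only $3b_1\le k-2$. Its count $\lambda\le 2b_1-2$ does give the full bound, but the stated threshold should have been $b_1<\frac{1}{3}(k+1)$; your version has no such slip to repair.
\end{itemize}
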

\proof
First note that, for a pair of adjacent vertices $x,y\in\Gamma$, we have
$|\Gamma_2(x)\cap\Gamma(y)|=b_1$.
Suppose that $x_1 \sim x_2 \sim x_3 \sim x_4$ is a shortest path.

If $b_1 \leq \frac{1}{3}(k+1)-1$, then $\lambda \geq 2(b_1-1) + 1$, and hence not all common neighbours of $x_2$ and $x_3$ are nonadjacent to $x_1$ or to $x_4$.
Therefore, $x_1$ and $x_4$ have a common neighbour, a contradiction.
\qed

\subsection{Taylor graphs}

A distance-regular graph $\Gamma$ with intersection array $(k, \mu, 1;\ 1, \mu, k)$ is called a \emph{Taylor graph}. Such a graph has diameter $3$ and order $2(k + 1)$. Moreover, for each vertex $x \in V(\Gamma)$, we have $|\Gamma(x)| = |\Gamma_2(x)| = k$ and $|\Gamma_3(x)| = 1$.

Given a graph $\Gamma$ with vertex set $V(\Gamma)$, its \emph{Taylor double} is the graph with vertex set $\{x^\varepsilon \mid x \in V(\Gamma),\ \varepsilon = \pm 1\}$, where two distinct vertices $x^\delta$ and $y^\varepsilon$ are adjacent (for $x \neq y$) if and only if $\delta\varepsilon = 1$ when $x \sim y$, and $\delta\varepsilon = -1$ when $x \not\sim y$. 

Given a strongly regular graph $\Delta$ with $v$ vertices satisfying $k = 2\mu$, its \emph{Taylor extension} is the Taylor double of the graph $\{\infty\} + \Delta$, where $\infty$ is a new vertex adjacent to every vertex of $\Delta$. The resulting graph is a Taylor graph on $2(v + 1)$ vertices with intersection array $\{v, v - k - 1, 1;\ 1, v - k - 1, v\}$ (see~\cite[p.19]{BM-2022}).

\subsection{Association schemes}
Let $X$ be a finite set, and let $\mathbb{C}^{X \times X}$ denote the set of complex matrices with rows and columns indexed by $X$. Let $\mathcal{R} = \{R_0, R_1, \ldots, R_n\}$ be a collection of non-empty subsets of $X \times X$, where each $R_i$ $(0 \leq i \leq n)$ is called a \emph{relation}. For each $i$, the \emph{relation graph} $\Gamma_i^{\mathcal{R}} := (X, R_i)$ corresponding to the relation $R_i$ is (in general) a directed graph with vertex set $X$ and edge set $R_i$. Let $F_i$ be the adjacency matrix of the graph $\Gamma_i^{\mathcal{R}}$. The pair $(X, \mathcal{R})$ is called an \emph{association scheme} with $n$ classes if the following conditions hold:

\begin{itemize}
    \item[(i)] $F_0 = I$, the identity matrix;
    \item[(ii)] $\sum_{i=0}^n F_i = J$, the all-ones matrix;
    \item[(iii)] $F_i^\top \in \{F_0, F_1, \ldots, F_n\}$ for all $0 \leq i \leq n$;
    \item[(iv)] $F_i F_j$ is a linear combination of $F_0, F_1, \ldots, F_n$ for all $0 \leq i, j \leq n$.
\end{itemize}

We refer to $|X|$ as the \emph{order} of the association scheme $(X, \mathcal{R})$. The vector space $\mathbb{A}$ spanned by $\{F_0, F_1, \ldots, F_n\}$ is called the \emph{Bose–Mesner algebra} of $(X, \mathcal{R})$. The scheme $(X, \mathcal{R})$ is said to be \emph{commutative} if $\mathbb{A}$ is commutative, and \emph{symmetric} if each matrix $F_i$ $(0 \leq i \leq n)$ is symmetric. Every symmetric association scheme is commutative.

Suppose $(X, \mathcal{R})$ is a symmetric association scheme with $n$ classes. Then each relation graph $\Gamma_i^{\mathcal{R}}$ ($1 \leq i \leq n$) is an undirected graph.  
Fix a vertex $x$ of $\Gamma_i^{\mathcal{R}}$. Then the partition $\{B_0, B_1, \ldots, B_n\}$ of $V(\Gamma_i^{\mathcal{R}})$, defined by $ B_j := \{y \in X \mid (x, y) \in R_j\}$, for $ 0 \leq j \leq n$, is an equitable partition. The \emph{distribution diagram} of the symmetric association scheme $(X, \mathcal{R})$ with respect to the relation $R_i$ is defined as the distribution diagram of the relation graph $\Gamma_i^{\mathcal{R}}$ with respect to the equitable partition $\{B_0, B_1, \ldots, B_n\}$.
For more on association schemes, see~\cite{BI-1984}.

\subsection{Group divisible designs} 

An \emph{incidence structure} $\mathcal{I} = (\mathcal{P}, \mathcal{B}, I)$ consists of a set $\mathcal{P}$ of points, a set $\mathcal{B}$ of blocks (disjoint from $\mathcal{P}$), and a relation $I \subseteq \mathcal{P} \times \mathcal{B}$ called \emph{incidence}.  
If $(p, B) \in I$, then we say that the point $p$ and the block $B$ are \emph{incident}.  
In most contexts, we regard each block $B$ as a subset of $\mathcal{P}$.  
Given an incidence structure $\mathcal{I} = (\mathcal{P}, \mathcal{B}, I)$, its \emph{dual incidence structure} is defined as $\mathcal{I}^* = (\mathcal{B}, \mathcal{P}, I^*)$, where $I^* = \{(B, p) \mid (p, B) \in I\}$.  
The \emph{point--block incidence graph} $\Gamma(\mathcal{I})$ of $\mathcal{I}$ is the graph with vertex set $\mathcal{P} \cup \mathcal{B}$, where two vertices are adjacent if and only if they are incident.  
Note that the point--block incidence graph of an incidence structure is a bipartite graph.

A \emph{group divisible design} $\mathcal{D} = (\mathcal{P}, \mathcal{G}, \mathcal{B})$ with parameters $(n, m; k; \lambda_1, \lambda_2)$, denoted by $GDD(n, m; k; \lambda_1, \lambda_2)$, consists of a set $\mathcal{P}$ of points, a partition $\mathcal{G}$ of $\mathcal{P}$ into $m$ subsets of size $n$ (called \emph{groups}), and a collection $\mathcal{B}$ of $k$-subsets of $\mathcal{P}$ (called \emph{blocks}), such that:
\begin{itemize}
    \item[(1)] each pair of points from the same group appears in exactly $\lambda_1$ blocks, and
    \item[(2)] each pair of points from different groups appears in exactly $\lambda_2$ blocks.
\end{itemize}
The triple $\mathcal{I} = (\mathcal{P}, \mathcal{B}, I)$, with the natural incidence relation $I$, forms an incidence structure, and we consider its dual $\mathcal{I}^* = (\mathcal{B}, \mathcal{P}, I^*)$.  
If there exists a partition $\mathcal{G}'$ of $\mathcal{B}$ such that the triple $(\mathcal{B}, \mathcal{G}', \mathcal{P})$ is a $GDD(n, m; k; \lambda_1, \lambda_2)$, then we say that $\mathcal{D}$ is a \emph{group divisible design with the dual property}, with parameters $(n, m; k; \lambda_1, \lambda_2)$, and we denote it by $GDDDP(n, m; k; \lambda_1, \lambda_2)$.

\section{Some families of amply regular graphs with  parameter $\mu = \frac{k - 1}{2}$}\label{4-constructions}
In this section, we present several examples of amply regular graphs with parameter $\mu = \frac{k - 1}{2}$. Some properties established later in Section~\ref{sec:main} will be used in the discussion of these examples. However, the proofs of those properties are independent of the material in the present section.

Let $q \geq 5$ be an odd prime power. We present three families of amply regular graphs with diameter $d = 4$ and parameters $(4q+4, q, 0, \frac{q - 1}{2})$, constructed from the Paley graphs, the Peisert graphs, and the Paley digraphs, respectively.
By Theorem~\ref{ARmain}, all the resulting graphs are the point--block incidence graphs of a $GDDDP\left(2, q+1;\, q;\, 0, \frac{q-1}{2}\right)$.
By Theorem~\ref{thm:equivalent}, all the resulting graphs are bipartite $Q$-regular graphs with distribution diagram shown in Figure~\ref{fig:D41}.

\subsection{Amply regular graphs with  parameter $\mu = \frac{k - 1}{2}$ from conference graphs}\label{app-A}

 The conference graphs are strongly regular graphs with parameters
\[
(v, k, \lambda, \mu) = \left(n, \frac{n - 1}{2}, \frac{n - 5}{4}, \frac{n - 1}{4}\right),
\]
where $n \equiv 1 \pmod{4}$ is a positive integer.
Their Taylor extension $\Sigma$ is a distance-regular graph on $2(n + 1)$ vertices with parameters
\[
k(\Sigma) = n, \quad b_1(\Sigma) = c_2(\Sigma) = \frac{n - 1}{2}, \quad \text{and} \quad a_1(\Sigma) = \frac{n - 1}{2}.
\]
The bipartite double of $\Sigma$, denoted by $\Gamma$, is a connected amply regular graph with diameter $d = 4$ and parameters
\[
(v, k, \lambda, \mu) =\left(4(n + 1),\, n,\, 0,\, \frac{n - 1}{2}\right),
\]
which admits an equitable partition with distribution diagram shown in Figure~\ref{fig:D41}.

Let $q$ be a prime power such that $q \equiv 1 \pmod{4}$. The \emph{Paley graph} $P(q)$ is defined as the graph with vertex set $\mathbb{F}_q$, where two distinct vertices $x, y \in \mathbb{F}_q$ are adjacent if and only if $x - y$ is a nonzero square in $\mathbb{F}_q$. The congruence condition guarantees that $-1$ is a square in $\mathbb{F}_q$, ensuring the graph is undirected. The graph $P(q)$ is a strongly regular graph with parameters $(v, k, \lambda, \mu) = \left(q, \frac{q - 1}{2}, \frac{q - 5}{4}, \frac{q - 1}{4} \right)$, and hence is a conference graph.

Let $q = p^r \geq 5$ be a prime power such that $p \equiv 3 \pmod{4}$ and $r$ is even. Note that, by the choice of $p$ and $r$,  $q \equiv 1 \pmod{4}$.
The \emph{Peisert graph} $P^*(q)$ is defined as the graph with vertex set $\mathbb{F}_q$, where two distinct vertices $x, y \in \mathbb{F}_q$ are adjacent if and only if $x - y$ belongs to the set $M = \{w^j : j \equiv 0,1 \pmod{4}\}$,
where $w$ is a primitive root of $\mathbb{F}_q$. 
Peisert graphs are strongly regular and share the same parameters as Paley graphs when defined on the same number of vertices. More precisely, the Peisert graph $P^*(q)$ has parameters $(v, k, \lambda, \mu) = \left(q, \frac{q - 1}{2}, \frac{q - 5}{4}, \frac{q - 1}{4} \right)$, and thus also qualifies as a conference graph. For more information on the Peisert graphs, we refer the reader to~\cite{Peisert-2001}.

Using the construction method described above, we can therefore derive families of amply regular graphs with parameter $\mu = \frac{k-1}{2}$ from the Paley graphs and the Peisert graphs. More precisely, we obtain the following proposition, which follows immediately from the bipartite double construction together with Theorem~\ref{diameter4}.

\begin{prop}\label{prop:paley}
Let $\Delta$ be either the Paley graph $P(q)$, where $q$ is a prime power satisfying $q\equiv 1\pmod{4}$, or the Peisert graph $P^*(q)$, where $q=p^r\ge 5$ with $p\equiv 3\pmod{4}$ and $r$ even.
Let $\Sigma$ be the Taylor extension of $\Delta$, and let $\Gamma$ be the bipartite double of $\Sigma$. 
Then $\Gamma$ is a connected bipartite amply regular graph with diameter $d = 4$ and parameters $(v, k, \lambda, \mu) = \left(4(q + 1),\, q,\, 0,\, \frac{q - 1}{2}\right)$, and $\Gamma$ is a $Q$-regular graph with distribution diagram shown in Figure~\ref{fig:D41}.
\end{prop}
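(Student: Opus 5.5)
We prove the proposition by computing directly in the Taylor extension and then passing to the bipartite double; the $Q$-regularity part will then come from Theorem~\ref{diameter4}.

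\textbf{Step 1: parameters of $\Sigma$.} Since $\Delta$ is a conference graph with parameters $\left(q,\frac{q-1}{2},\frac{q-5}{4},\frac{q-1}{4}\right)$, it satisfies $k=2\mu$. The Taylor extension is therefore available, and $\Sigma$ is a Taylor graph on $2(q+1)$ vertices. Its intersection array is $\{q,\frac{q-1}{2},1;1,\frac{q-1}{2},q\}$, so $a_1(\Sigma)=\frac{q-1}{2}$. Write $\bar x$ for the unique antipode of $x$ in $\Sigma$. The distances in $\Sigma$ are then as follows: $\Sigma(x)$ has size $q$, $\Sigma_2(x)$ has size $q$, and $\Sigma_3(x)=\{\bar x\}$.

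\textbf{Step 2: distances in the double.} Let $\Gamma$ be the bipartite double of $\Sigma$, with vertices $x^\pm$. A walk of length $\ell$ in $\Sigma$ from $x$ to $y$ lifts to a path from $x^+$ to $y^{\pm}$, with the sign determined by the parity of $\ell$. Hence $d_\Gamma(x^+,y^\epsilon)$ is the least $\ell$ of the correct parity such that a walk of length $\ell$ joins $x$ to $y$ in $\Sigma$. For $k\ge 5$ we have $a_1>0$, $a_2>0$ and $a_3=0$, and $\Sigma$ contains triangles. From this we read off the distances from $x^+$:
\begin{itemize}
\item $d(x^+,y^-)=1$ if $y\sim x$;
\item $d(x^+,y^+)=2$ if $y\in\Sigma(x)\cup\Sigma_2(x)$;
\item $d(x^+,y^-)=3$ if $y\in\Sigma_2(x)$, or if $y=\bar x$ (via walks of length $3$);
\item $d(x^+,y^-)=3$ also if $y=x$ (via a triangle);
\item $d(x^+,\bar x^+)=4$, because no walk of length $2$ joins $x$ to $\bar x$.
\end{itemize}
So $\Gamma$ has diameter $4$, it is bipartite, and it is $q$-regular on $4(q+1)$ vertices. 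It is connected because $\Sigma$ is connected and non-bipartite.

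\textbf{Step 3: $\lambda$ and $\mu$.} We have $\lambda=0$ since $\Gamma$ is bipartite. For $\mu$, the common neighbours of $x^+$ and $y^+$ correspond to $\Sigma(x)\cap\Sigma(y)$.
\begin{itemize}
\item If $y\sim x$ in $\Sigma$, this set has size $a_1=\frac{q-1}{2}$.
\item If $d_\Sigma(x,y)=2$, it has size $c_2=\frac{q-1}{2}$.
\end{itemize}
Hence $\mu=\frac{q-1}{2}$ in both cases, and $\Gamma$ is amply regular with the stated parameters.

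\textbf{Step 4: the distribution diagram.} Fix $x^+$ and refine the distance partition using the distances in $\Sigma$. This gives the cells
\[
\{x^+\},\ \Sigma(x)^-,\ \Sigma(x)^+,\ \Sigma_2(x)^+,\ \Sigma_2(x)^-\cup\{x^-,\bar x^-\},\ \{\bar x^+\}.
\]
Possibly $\{x^-\}$ and $\{\bar x^-\}$ have to be kept as separate cells. Equitability follows from the intersection numbers of $\Sigma$, which gives the diagram of Figure~\ref{fig:D41}. Alternatively, $Q$-regularity follows from Theorem~\ref{diameter4}, as the paper indicates.

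The main obstacle is Step 4: matching the partition exactly to the cells of Figure~\ref{fig:D41}. This requires checking every entry of the quotient matrix against the Taylor graph parameters. That check is a routine but careful count, for example of the neighbours of $x^-$ and of $\bar x^-$ inside each cell.
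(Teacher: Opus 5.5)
Steps 1--3 are correct. They are more explicit than the paper, which simply asserts that the bipartite double of the Taylor extension is connected and amply regular with diameter $4$ and parameters $(4(q+1),q,0,\frac{q-1}{2})$. Your fallback for $Q$-regularity, invoking Theorem~\ref{diameter4}, is exactly the paper's argument. Two points need fixing, though.

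\textbf{The explicit partition in Step 4 is not equitable.} So the direct route fails as written.
\begin{itemize}
\item A vertex $u^-$ with $u\in\Sigma_2(x)=\Sigma(\bar x)$ is adjacent to $\bar x^+$, whereas $x^-$ and $\bar x^-$ are not.
\item $u^-$ has $c_2=\frac{q-1}{2}$ neighbours in $\Sigma(x)^+$, whereas $x^-$ has $q$ and $\bar x^-$ has none.
\end{itemize}
Splitting off $\{x^-\}$ and $\{\bar x^-\}$, as you suggest, yields an equitable $8$-cell refinement, but that is not the diagram of Figure~\ref{fig:D41}. What you need is a regrouping, not a further splitting. The correct six cells are
\[
\{x^+\},\quad \Sigma(x)^-,\quad \Sigma(x)^+\cup\Sigma_2(x)^+,\quad \Sigma_2(x)^-,\quad \{x^-,\bar x^-\},\quad \{\bar x^+\},
\]
of sizes $1,q,2q,q,2,1$. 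This is the distance partition of $x^+$, with $\Gamma_3(x^+)$ split into $\Gamma(\bar x^+)=\Sigma_2(x)^-$ and the two remaining vertices. Merging $\Sigma(x)^+$ with $\Sigma_2(x)^+$ works because $a_1=b_1=c_2=a_2=\frac{q-1}{2}$ in $\Sigma$. Consequently every vertex of the middle cell has $\frac{q-1}{2}$ neighbours in $\Sigma(x)^-$, $\frac{q-1}{2}$ neighbours in $\Sigma_2(x)^-$, and exactly one neighbour in $\{x^-,\bar x^-\}$.

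\textbf{You must exclude the other alternative of Theorem~\ref{diameter4}.} That theorem also allows $\Gamma\cong\K_2\square\Lambda$. That graph has valency $5$ and $28$ vertices, while yours has $4(q+1)=24$ vertices when $q=5$. Equivalently, your Step 2 already gives $|\Gamma_4(x^+)|=1$ for every vertex, which is precisely the case $\alpha=1$ in the proof of Theorem~\ref{diameter4}.
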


\subsection{Amply regular graphs with parameter $\mu=\frac{k-1}{2}$ from non-symmetric association schemes with $2$ classes}\label{app-C}

Let $\{A_0 = I_n, A_1, A_2\}$ be the set of adjacency matrices of a non-symmetric association scheme with $2$ classes and order $n$.
Ikuta and Munemasa~\cite{IM-2022} proposed a method to construct non-symmetric association schemes  with $3$ classes from those  with $2$ classes, as follows. Define
\[
C_0 = I_{2(n+1)},\qquad
C_1 = \begin{bmatrix}
0 & \mathbf{1} & 0 & 0 \\
0 & A_1 & A_2 & \mathbf{1}^T \\
\mathbf{1}^T & A_2 & A_1 & 0 \\
0 & 0 & \mathbf{1} & 0
\end{bmatrix}, \qquad
C_2 = C_1^T, \qquad
C_3 = J - C_0 - C_1 - C_2,
\]
where $\mathbf{1}$ denotes the all-one row vector of length $n$. By~\cite[Theorem~4]{IM-2022}, the matrices $\{C_0, C_1, C_2, C_3\}$ form the adjacency matrices of a non-symmetric association scheme with $3$ classes.
Moreover, from~\cite[Lemma~3]{IM-2022}, the following identity holds:
\begin{equation}\label{eq:C1C2}
C_1 C_2 = C_2 C_1 = n C_0 + \frac{n - 1}{2}(C_1 + C_2).
\end{equation}

We now construct an amply regular graph with valency $k = n$ and parameter $\mu = \frac{n - 1}{2}$. 

\begin{prop}\label{prop:amply-from-ass}
Let $\{A_0 = I_n, A_1, A_2\}$ be the set of adjacency matrices of a non-symmetric association scheme with $2$ classes and order $n \geq 5$.
Let $\{C_0, C_1, C_2, C_3\}$ be defined as above. 
Let
\[
B = \begin{bmatrix}
0 & C_1 \\
C_1^T & 0
\end{bmatrix}
\]
be the adjacency matrix of an undirected graph $\Gamma$.
Then $\Gamma$ is a connected bipartite amply regular graph with diameter $d = 4$ and parameters
$(v, k, \lambda, \mu) = \left(4(n + 1),\, n,\, 0,\, \frac{n - 1}{2}\right)$, and $\Gamma$ is a $Q$-regular graph with distribution diagram shown in Figure~\ref{fig:D41}.
\end{prop}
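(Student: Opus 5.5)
The plan is to compute everything from the block structure of $B$ and from identity~\eqref{eq:C1C2}, and then to invoke Theorem~\ref{diameter4} (as was done for Proposition~\ref{prop:paley}) to get the distribution diagram of Figure~\ref{fig:D41}.

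\emph{Regularity and size.} First, I check that $C_1$ is a $0/1$ matrix of order $2(n+1)$ with all row and column sums equal to $n$. For the rows:
\begin{itemize}
\item the first and last rows are $\mathbf{1}$ (padded by zeros), so they have $n$ ones;
\item a middle row has $\frac{n-1}{2}$ ones from $A_1$, $\frac{n-1}{2}$ ones from $A_2$ and one further $1$, giving $n$.
\end{itemize}
These entries are $\frac{n-1}{2}$ each because in a non-symmetric $2$-class scheme $A_2=A_1^T$, so both have valency $\frac{n-1}{2}$. For the columns, $C_2=C_1^T$ is a relation of the scheme, so it is also regular of valency $n$. Hence $\Gamma$ is bipartite, $n$-regular, has $4(n+1)$ vertices, and $\lambda=0$ since bipartite graphs contain no triangles.

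\emph{The parameter $\mu$.} By \eqref{eq:C1C2},
\[
B^2=\begin{bmatrix} C_1C_1^T & 0\\ 0 & C_1^TC_1\end{bmatrix}
=\begin{bmatrix} nC_0+\frac{n-1}{2}(C_1+C_2) & 0\\ 0 & nC_0+\frac{n-1}{2}(C_1+C_2)\end{bmatrix}.
\]
So two distinct vertices on the same side have $\frac{n-1}{2}$ common neighbours if their pair lies in $C_1\cup C_2$, and $0$ common neighbours if it lies in $C_3$. Every pair at distance $2$ lies on the same side and has a common neighbour, so it has exactly $\mu=\frac{n-1}{2}$ common neighbours.

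\emph{Connectivity and diameter.} From the valencies, $C_3$ has row sums $2(n+1)-1-2n=1$, so it is a permutation matrix of a fixed-point-free involution. Also $C_1+C_2=J-I-C_3$. Same-side pairs in $C_1\cup C_2$ are at distance $2$. For pairs in $C_3$ I use
\[
(J-I-C_3)^2=(2n-2)J+I+3C_3 ,
\]
where $3C_3=C_3+2C_3$, since $C_3^2=I$ and $C_3$ has row sums $1$ while $J-I-C_3$ has row sums $2n$. Its entries on $C_3$-pairs equal $2n+1>0$. Hence $B^4$ is positive on those pairs, which therefore have distance exactly $4$ (not $2$, and even). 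For opposite sides, the off-diagonal block of $B^3$ is $\bigl(nI+\frac{n-1}{2}(J-I-C_3)\bigr)C_1$. Here $(J-I-C_3)C_1=nJ-C_1-C_3C_1$ has all entries at least $n-2>0$, so every opposite-side pair is at distance $1$ or $3$. Thus $\Gamma$ is connected with diameter exactly $4$, and every vertex has a unique vertex at distance $4$.

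\emph{$Q$-regularity.} Finally, the diagram follows from Theorem~\ref{diameter4} applied to this amply regular graph with $\mu=\frac{k-1}{2}$ and $d=4$, exactly as in Proposition~\ref{prop:paley}. The main obstacle is this last step if Theorem~\ref{diameter4} were not available. In that case I would verify equitability directly, taking as parts of the partition at $x$:
\begin{itemize}
\item $\{x\}$;
\item the $C_1$-neighbours, the $C_2$-neighbours and the $C_3$-mate of $x$ on the same side;
\item on the opposite side, the vertices at distance $1$ and at distance $3$ from $x$.
\end{itemize}
The required intersection counts are the entries of products such as $C_1C_1$, $C_1C_2$ and $C_3C_1$, which are constants because $\{C_0,C_1,C_2,C_3\}$ is an association scheme.
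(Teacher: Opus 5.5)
Your proof is correct and takes the same route as the paper: you read off $\lambda=0$, $\mu=\frac{n-1}{2}$ and a unique vertex at distance $4$ from $B^2$ via \eqref{eq:C1C2}, then invoke Theorem~\ref{diameter4}; your $B^3$ and $B^4$ computations only make explicit the connectivity and diameter claims the paper states tersely, and $\alpha=1$ excludes $\K_2\square\Lambda$. Two small corrections: in fact $(J-I-C_3)^2=(2n-2)J+2I+2C_3$, so the entries on $C_3$-pairs are $2n$ rather than $2n+1$ (still positive, so nothing changes); and your fallback partition would not be equitable, because exactly $n$ of the $n+2$ vertices at distance $3$ from $x$ are adjacent to the antipode $x'$, so that part must be split into those $n$ vertices and the remaining two, as in Figure~\ref{fig:D41}.
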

\proof
As $A_1 + A_1^T = A_2 + A_2^T = A_1 + A_2 = J - I$, it follows that 
\begin{align}
C_1 + C_2 &= \begin{bmatrix}
0 & \mathbf{1} & 0 & 0 \\
0 & A_1 & A_2 & \mathbf{1}^T \\
\mathbf{1}^T & A_2 & A_1 & 0 \\
0 & 0 & \mathbf{1} & 0
\end{bmatrix}
+
\begin{bmatrix}
0 & 0 & \mathbf{1} & 0 \\
\mathbf{1}^T & A_1^T & A_2^T & 0 \\
0 & A_2^T & A_1^T & \mathbf{1} ^T \\
0 & \mathbf{1} & 0 & 0
\end{bmatrix}  
=
\begin{bmatrix}
0 & \mathbf{1} & \mathbf{1} & 0 \\
\mathbf{1}^T & J-I & J-I & \mathbf{1}^T \\
\mathbf{1}^T & J-I & J-I & \mathbf{1} ^T \\
0 & \mathbf{1} & \mathbf{1} & 0
\end{bmatrix}.   \notag
\end{align}
Therefore, by \eqref{eq:C1C2},
\begin{align}\label{C1C2-amply}
C_1 C_2 = C_2 C_1 = n
\begin{bmatrix}
1 & 0 & 0 & 0 \\
0 & I & 0 & 0 \\
0 & 0 & I & 0 \\
0 & 0 & 0 & 1
\end{bmatrix}
+ \frac{n-1}{2}
\begin{bmatrix}
0 & \mathbf{1} & \mathbf{1} & 0 \\
\mathbf{1}^T & J-I & J-I & \mathbf{1}^T \\
\mathbf{1}^T & J-I & J-I & \mathbf{1} ^T \\
0 & \mathbf{1} & \mathbf{1} & 0
\end{bmatrix}.
\end{align}

Let $V(\Gamma) = X_1 \cup X_2$ be the bipartition of the vertex set. 
Since
\begin{align*}
B^2 = 
\begin{bmatrix}
C_1 C_1^T & 0 \\
0 & C_1^T C_1
\end{bmatrix}
= 
\begin{bmatrix}
C_1 C_2 & 0 \\
0 & C_2 C_1
\end{bmatrix},
\end{align*}
for each $x \in X_i$ ($i = 1,2$), there exists a unique $x' \in X_i$ such that $d_\Gamma(x,x') = 4$ and $d_\Gamma(x,u) = 2$ for all $u \in X_i \setminus \{x,x'\}$.
Consequently, $\Gamma$ is a connected bipartite graph with diameter $d = 4$.
Combining this with \eqref{C1C2-amply}, we conclude that $\Gamma$ is an amply regular graph with $4n+4$ vertices, valency $k = n$, and parameter $\mu = \frac{n - 1}{2}$.

As $d = 4$ and $k \geq 5$, Theorem~\ref{diameter4} implies that and $\Gamma$ is a bipartite $Q$-regular graph with distribution diagram shown in Figure~\ref{fig:D41}.
\qed

Next, we apply the construction described in this subsection to obtain a family of amply regular graphs with parameter $\mu=\frac{k-1}{2}$ from the Paley digraphs.

Let $q \geq 7$ be a prime power such that $q \equiv 3 \pmod{4}$. Then the finite field $\mathbb{F}_q$ contains no square root of $-1$. Consequently, for each pair of distinct elements $a, b \in \mathbb{F}_q$, exactly one of $a - b$ and $b - a$ is a square in $\mathbb{F}_q$.
The \emph{Paley digraph} $\vec{P}(q)$ is the directed graph with vertex set $\mathbb{F}_q$, where there is an arc from $a$ to $b$ (with $a \ne b$) if and only if $b - a \in 
\Box = \{ x \in \mathbb{F}_q^* \mid x = y^2 \text{ for some } y \in \mathbb{F}_q \}$.

Let $A_1$ be the adjacency matrix of $\vec{P}(q)$, and let $A_2 = A_1^T$ be its transpose. Then $I_q + A_1 + A_2 = J$, where $I_q$ is the $q \times q$ identity matrix and $J$ is the all-ones matrix of the same order. 
It is easy to verify that the set of  matrices $\{A_0 = I_q, A_1, A_2\}$ forms a non-symmetric association scheme with $2$ classes; see, for example,~\cite[p.~49]{BI-1984}.

Using the construction method described above, we can therefore derive a family of amply regular graphs with parameter $\mu = \frac{k - 1}{2}$ from the Paley digraphs, satisfying Proposition~\ref{prop:amply-from-ass}.

\subsection{A characterization of bipartite amply regular graphs with $\mu = \frac{k-1}{2}$}

In this subsection, we give equivalent characterizations of the graphs satisfying Theorem~\ref{ARmain}(3), thereby generalizing Theorem~2 of Qiao et al.~\cite{qdk-1}.

\begin{theo}\label{thm:equivalent}
Let $\Gamma$ be a connected amply regular graph with diameter $d = 4$ and parameters $(v, k, 0, \mu)$, where $\mu = \frac{k - 1}{2}$ and $k \geq 5$ is odd. 
Then the following are equivalent:
\begin{itemize}
    \item[(1)] $\Gamma$ is a bipartite $Q$-regular graph with distribution diagram shown in Figure~\ref{fig:D41}.

    \item[(2)] $\Gamma$ is the point--block incidence graph of a $GDDDP\left(2, k+1;\, k;\, 0, \frac{k-1}{2}\right)$.
    \item[(3)] $\Gamma$ is a relation graph with respect to a relation $R$ of a symmetric association scheme $(X, \mathcal{R})$ with $5$ classes such that the distribution diagram of $(X, \mathcal{R})$ with respect to $R$ is as in Figure~\ref{fig:D41}. In particular, $\Gamma$ is a bipartite $2$-walk-regular graph of order $4k + 4$, diameter $4$, and exactly $6$ distinct eigenvalues.
    \end{itemize}
\end{theo}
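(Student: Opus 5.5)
Since Figure~\ref{fig:D41} is only available as a picture, I first fix what it must encode, using the constructions of Section~\ref{4-constructions}. Fix a vertex $x$ of a bipartite $\Gamma$ with $d=4$, $\lambda=0$, $\mu=\frac{k-1}{2}$, and let $V(\Gamma)=X\cup Y$ with $x\in X$. Counting paths of length $2$ from $x$ gives $|\Gamma_2(x)|=k(k-1)/\mu=2k$. The diagram I expect has the following parts: $\{x\}$; $\Gamma(x)$ of size $k$; $\Gamma_2(x)$ of size $2k$; $\Gamma_3(x)$ split as $D_3\cup D_3'$, with $D_3'$ of size $1$; and $\Gamma_4(x)=\{x'\}$ of size $1$. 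Here $x'$ is adjacent to all of $D_3$, and the single vertex of $D_3'$ is adjacent to all of $\Gamma_2(x)$. This gives $4k+4$ vertices. Equivalently, within each colour class every vertex is at distance $2$ from all other vertices except one antipode, and on the other side it has exactly one vertex at distance $3$ with no common neighbour structure beyond $\Gamma_2(x)$. The "groups" are therefore the pairs $\{x,x'\}$ in $X$ and the pairs formed by the non-neighbours in $Y$.

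\textbf{(1)$\Rightarrow$(2).} Take $\mathcal P=X$, $\mathcal B=Y$, and let the groups be the antipodal pairs $\{x,x'\}$. Then $\Gamma_4(x)=\{x'\}$ shows that "being at distance $4$" is a perfect matching on $X$, giving $m=|X|/2=k+1$ groups of size $2$. Blocks have size $k$. Two points in the same group have $0$ common neighbours, so $\lambda_1=0$. Two points in different groups are at distance $2$, so $\lambda_2=\mu=\frac{k-1}{2}$. The diagram is the same at every vertex, including vertices of $Y$, so the same argument applies to the dual. The groups there are pairs in $Y$ at distance $4$, so the design has the dual property.

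\textbf{(2)$\Rightarrow$(3).} Define six relations on $X\cup Y$:
\begin{itemize}
\item $R_0$: equality;
\item $R_1$: incidence;
\item $R_2$: same side, different group;
\item $R_3$: opposite sides, not incident, and not the "special" pair;
\item $R_4$: the special pair;
\item $R_5$: same group.
\end{itemize}
The special pair needs care. For a block $B$, consider the group structure $\{p,p'\}$ on points. Counting gives $|B|=k$ with the $k+1$ groups, so $B$ meets every group in at most one point, since $\lambda_1=0$ forces group-mates into no common block. Hence $B$ misses exactly one group entirely. That group is the candidate for the $D_3'$ vertices, although the diagram has $D_3'$ of size $1$ on the other side. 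I would set this up by counting precisely via the dual-GDD identities $NN^T=kI+\frac{k-1}{2}(J-I-P)$, where $P$ is the group permutation matrix, together with its dual. I would then verify that the span of the six adjacency matrices is closed under multiplication, using these matrix identities plus $NP'=PN$-type commutation relations, which I must derive from the counting. From the Bose--Mesner algebra one obtains the $2$-walk-regularity and the spectrum. The eigenvalues are $\pm k$, $\pm\sqrt{k}$ from $NN^T$ on the group-constant vectors, and $\pm1$ on vectors antisymmetric on groups, because there $NN^T$ acts as $k-\frac{k-1}{2}\cdot 2\cdot$. The last value should be checked, but the six values should come out as $\pm k,\pm\sqrt{\cdot},\pm\cdot$.

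\textbf{(3)$\Rightarrow$(1).} This is immediate: the partition by relations from a fixed vertex is equitable with quotient matrix given by Figure~\ref{fig:D41}, and bipartiteness is read off from the diagram.

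\textbf{Main obstacle.} The hard part is the closure computation in (2)$\Rightarrow$(3). Specifically, I need to show that the "missing group" maps are compatible between the two sides, i.e.\ that this defines a symmetric bijection between $X$-groups and $Y$-groups. I would first try to prove $NP_Y=P_XN$ (possibly up to a complement) by comparing $NN^TN$ computed in two ways, and then use the GDD identities to express each product $F_iF_j$.
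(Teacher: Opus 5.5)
Your (1)$\Rightarrow$(2) and (3)$\Rightarrow$(1) are the paper's arguments. The gap is in (2)$\Rightarrow$(3), and it starts with your reading of Figure~\ref{fig:D41}, which that implication must reproduce. Your description cannot be right. No vertex can be adjacent to all of $\Gamma_2(x)$, since $|\Gamma_2(x)|=2k>k$. Also, with $D_3=\Gamma(x')$ your parts sum to $4k+3$, not $4k+4$. Bipartite counting $1+2k+1=k+|\Gamma_3(x)|$ gives $|\Gamma_3(x)|=k+2$. So $\Gamma_3(x)$ consists of $\Gamma(x')$ (size $k$) and a part $D$ of size $2$: the two blocks avoiding the group $\{x,x'\}$. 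Each $p\in\Gamma_2(x)$ lies in $\mu$ blocks through $x$ and in $\mu$ other blocks through $x'$, hence in exactly $k-2\mu=1$ block of $D$. So $p$ has $\mu,\mu,1$ neighbours in $\Gamma(x),\Gamma(x'),D$. The two blocks of $D$ therefore partition $\Gamma_2(x)$; in particular they are disjoint, i.e.\ a dual group. Consequently your $R_4$, ``the special pair'' with one partner per vertex, does not exist. A block misses a whole group of two points, and nothing singles out one of them. The correct relation is $R_4=\{(p,B),(B,p) : B\cap\{p,p'\}=\emptyset\}$, of valency $2$ on both sides.

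With this repair your direct route works. It is genuinely different from the paper, which obtains (2)$\Rightarrow$(3) only by citing \cite[Theorem~2]{qdk-1}. However, you left the decisive step open, and it takes one line. You have $NN^T=\frac{k+1}{2}I+\frac{k-1}{2}(J-P)$, $N^TN=\frac{k+1}{2}I+\frac{k-1}{2}(J-P')$ and $NJ=JN=kJ$. Expanding $(NN^T)N=N(N^TN)$ then gives $PN=NP'$. Take $F_1$ as incidence and $F_5=\mathrm{diag}(P,P')$. Then $F_3:=F_1F_5=F_5F_1$ is the relation ``opposite sides, block contains the mate''. Let $F_2$ and $F_4$ be the complements inside $\mathrm{diag}(J,J)$ and inside $\begin{bmatrix}0&J\\ J&0\end{bmatrix}$. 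These are $0/1$ matrices because $\lambda_1=0$ and $P,P'$ are fixed-point-free. Closure is then routine, e.g.\ $F_1^2=kF_0+\frac{k-1}{2}F_2$ and $F_1F_3=kF_5+\frac{k-1}{2}F_2$. Your eigenvalue attribution is also reversed. On group-constant vectors orthogonal to $\mathbf{1}$, $NN^T$ acts as $\frac{k+1}{2}-\frac{k-1}{2}=1$. On vectors antisymmetric on each group it acts as $\frac{k+1}{2}+\frac{k-1}{2}=k$. So $\Gamma$ has eigenvalues $\pm k$, $\pm1$ (multiplicity $k$ each) and $\pm\sqrt{k}$ (multiplicity $k+1$ each). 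These are six distinct values since $k\ge5$.
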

\proof
{\boldmath $(1) \Rightarrow (2)$:} Fix a vertex $x \in V(\Gamma)$, and define the triple $\mathcal{D} = (\mathcal{P}, \mathcal{G}, \mathcal{B})$ as follows:
\[
\mathcal{P} := \{ y \in V(\Gamma) \mid d_{\Gamma}(x, y) \equiv 0 \pmod{2} \},
\]
\[
\mathcal{G} := \left\{ \{y_1, y_2\} \mid d_{\Gamma}(y_1, x),\ d_{\Gamma}(y_2, x) \equiv 0 \pmod{2},\ d_{\Gamma}(y_1, y_2) = 4 \right\},
\]
\[
\mathcal{B} := \left\{ \Gamma(z) \mid z \in V(\Gamma)\ \text{and}\ d_{\Gamma}(z, x) \equiv 1 \pmod{2} \right\}.
\]

For each vertex $y \in V(\Gamma)$, since $\Gamma$ is a bipartite $Q$-regular graph with distribution diagram shown in Figure~\ref{fig:D41}, it follows that $|\Gamma_4(y)| = 1$.
Thus, there exists a unique vertex $y'$ at distance $4$ from $y$.
Hence $\mathcal{G}$ is a partition of $\mathcal{P}$.
Furthermore,  $\mathcal{B}$ is a collection of $k$-subsets of $\mathcal{P}$ such that each pair of points from the same group appears in exactly $0$ blocks,
and each pair of points from different groups appears in exactly $\mu = \frac{k-1}{2}$ blocks. Thus $\mathcal{D} = (\mathcal{P}, \mathcal{G}, \mathcal{B})$ is a group divisible design with parameters $\left(2, k+1;\, k;\, 0, \frac{k-1}{2}\right)$, with $\Gamma$ as its point--block incidence graph.

Moreover, by taking a vertex at odd distance from $x$ and repeating the above construction, we obtain that $\mathcal{D}$ is a $GDDDP\left(2, k+1;\, k;\, 0, \frac{k-1}{2}\right)$.

{\boldmath $(2) \Rightarrow (3)$:} This follows immediately from \cite[Theorem~2]{qdk-1}.

{\boldmath $(3) \Rightarrow (1)$:}
Let $(X, \mathcal{R})$ be a symmetric association scheme, where $\mathcal{R} = \{R_0, R_1, \ldots, R_5\}$, and let $\Gamma$ be the relation graph with respect to the relation $R$. Then clearly $\Gamma$ is a bipartite graph.
As the distribution diagram of $(X, \mathcal{R})$ with respect to the relation $R$ is as in Figure~\ref{fig:D41}, for each vertex $x$ of $\Gamma$, the partition $\{B_0, B_1, \ldots, B_5\}$ of $V(\Gamma)$, where $B_j := \{y \mid (x,y) \in R_j\}$ for $0 \leq j \leq 5$, is equitable, and its distribution diagram is as in Figure~\ref{fig:D41}. 
Therefore, $\Gamma$ is a bipartite $Q$-regular graph with distribution diagram shown in Figure~\ref{fig:D41}.
\qed

\begin{rem}
Ikuta and Munemasa~\cite{IM-2022} constructed non-symmetric $3$-class association schemes from the directed Paley graphs. Using their construction and an analogue of the bipartite double, we construct bipartite amply regular graphs with $\mu = \frac{k - 1}{2}$. It follows from Theorem~\ref{thm:equivalent} that these graphs are the relation graphs of certain symmetric $5$-class association schemes.
\end{rem}

\section{Amply regular graphs with parameter $\mu = \frac{k-1}{2}$}\label{sec:main}

In this section, we aim to characterize amply regular graphs with diameter $d \geq 4$ and parameters $(v, k, \lambda, \mu)$, where $\mu = \frac{k - 1}{2}$ and $k \geq 5$ is odd. The main result of this section is a proof of Theorem~\ref{ARmain}.

We start with several lemmas that will be used repeatedly in this section. For convenience, we restate~\cite[Lemma 2.2]{BK-2009} below.

\begin{lemma}[cf.~{\cite[Lemma 2.2]{BK-2009}}]\label{mubound}
Let $\Gamma$ be a bipartite graph with vertex partition $V(\Gamma) = X \cup Y$, where $|X| = n$ and $|Y| = \alpha \geq 2$. Suppose that each vertex in $Y$ has, on average, $w$ neighbors in $X$. Then there exists a pair of vertices $y_1, y_2 \in Y$ that have at least $ \frac{w^2}{n} - \frac{w(n - w)}{n(\alpha - 1)} = \frac{w}{n}\left(w - \frac{n-w}{\alpha-1}\right)$ common neighbors.
\end{lemma}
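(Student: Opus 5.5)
The plan is a double-counting argument on the bipartite graph of edges between $X$ and $Y$, followed by Cauchy--Schwarz. For $x \in X$ let $d_x$ denote the number of neighbours of $x$ in $Y$. The number of edges is $\sum_{x\in X} d_x = \alpha w$.

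First count the ordered pairs $(y_1,y_2)$ of distinct vertices of $Y$ together with a common neighbour $x\in X$. Grouping by $x$ gives the total
\[
S:=\sum_{x\in X} d_x(d_x-1)=\sum_{\substack{y_1,y_2\in Y\\ y_1\neq y_2}} |\Gamma(y_1)\cap\Gamma(y_2)|.
\]
Since $\alpha\ge 2$ there are $\alpha(\alpha-1)\geq 2$ ordered pairs. Hence some pair has at least the average number of common neighbours, namely $S/(\alpha(\alpha-1))$.

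Next, bound $S$ from below with Cauchy--Schwarz over the $n$ vertices of $X$:
\[
\sum_{x\in X} d_x^2\ \ge\ \frac{1}{n}\Big(\sum_{x} d_x\Big)^2=\frac{\alpha^2w^2}{n}.
\]
This gives $S\ge \frac{\alpha^2 w^2}{n}-\alpha w$, and dividing by $\alpha(\alpha-1)$ yields
\[
\frac{S}{\alpha(\alpha-1)}\ \ge\ \frac{\alpha w^2/n - w}{\alpha-1}.
\]

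Finally, check that this lower bound equals the stated expression. Write $\alpha = (\alpha-1)+1$. Then
\[
\frac{\alpha w^2/n-w}{\alpha-1}=\frac{w^2}{n}+\frac{w^2/n-w}{\alpha-1}=\frac{w^2}{n}-\frac{w(n-w)}{n(\alpha-1)}.
\]
This is the bound in the statement, and it rewrites directly as $\frac{w}{n}\left(w-\frac{n-w}{\alpha-1}\right)$.

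No step is a genuine obstacle. The only points needing care are counting ordered pairs consistently with $d_x(d_x-1)$, and using $\alpha\ge 2$ so that the division by $\alpha-1$ is legitimate and at least one pair exists.
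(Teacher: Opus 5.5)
Your proof is correct and follows essentially the standard route: the paper gives no proof of this lemma and only cites \cite[Lemma 2.2]{BK-2009}. Its own proof of the strengthened Lemma~\ref{equalitycase} uses the same argument, namely double counting the triples $(x,y_1,y_2)$ and then applying convexity (Jensen for $\binom{s}{2}$), which is equivalent to your Cauchy--Schwarz step, before passing to an average over pairs.
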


In our case of interest, Lemma~\ref{mubound} can be strengthened, as stated below in Lemma~\ref{equalitycase}.

\begin{lemma}\label{equalitycase}
Let $\Gamma$ be a bipartite graph with vertex partition $V(\Gamma) = X \cup Y$. Suppose that $d(x_i) = s_i$ for $x_i \in X$, where $1 \leq i \leq |X|$, and that $d(y) = k$ for all $y \in Y$. Further assume that every pair of distinct vertices $y_1, y_2 \in Y$ with $d(y_1, y_2) = 2$ have exactly $\mu$ common neighbors. Then we have $(|Y| - 1)\mu \geq k\left( \frac{|Y| \cdot k}{|X|} - 1 \right)$, with equality if and only if $s_1 = s_2 = \cdots = s_{|X|}$ and every pair of vertices in $Y$ has distance 2.
\end{lemma}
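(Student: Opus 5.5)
We prove the inequality by double counting the paths $y_1 - x - y_2$ with $y_1 \neq y_2$ in $Y$ and $x \in X$; call this number $N$.

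\textbf{Counting from the $Y$ side.} Each ordered pair $(y_1,y_2)$ of distinct vertices of $Y$ has $|\Gamma(y_1,y_2)|$ common neighbours, all of which lie in $X$ because $\Gamma$ is bipartite. This number is $\mu$ if $d(y_1,y_2)=2$ and $0$ otherwise. Fix $y_1$. There are at most $|Y|-1$ vertices $y_2 \neq y_1$, so
\[
N \le |Y|(|Y|-1)\mu,
\]
with equality if and only if every pair of distinct vertices of $Y$ is at distance $2$.

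\textbf{Counting from the $X$ side.} Each $x_i$ contributes $s_i(s_i-1)$ ordered pairs, so $N=\sum_i s_i^2-\sum_i s_i$. Counting edges gives $\sum_i s_i = |Y|k$. By Cauchy--Schwarz,
\[
\sum_i s_i^2 \ge \frac{(|Y|k)^2}{|X|},
\]
with equality if and only if all $s_i$ are equal. Hence
\[
N \ge |Y|k\left(\frac{|Y|k}{|X|}-1\right).
\]

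\textbf{Combining.} Putting the two bounds together and dividing by $|Y|$ gives
\[
(|Y|-1)\mu \ge k\left(\frac{|Y|k}{|X|}-1\right).
\]
Equality holds exactly when both estimates are tight, that is, when the $s_i$ are all equal and every pair of vertices of $Y$ is at distance $2$. The converse direction is immediate, since those two conditions make both steps equalities.

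There is no real obstacle here. The only point needing care is that pairs at distance greater than $2$ contribute $0$ while pairs at distance $2$ contribute $\mu$, so the $Y$-side bound is an upper bound. If $\mu=0$, the inequality and its equality characterization still go through, because the same bound and the same equality condition apply.
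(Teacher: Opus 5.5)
Your double count is the paper's argument. The paper counts unordered triples $(x,\{y_1,y_2\})$ and bounds $\sum_i\binom{s_i}{2}$ above by $\binom{|Y|}{2}\mu$ and below by $|X|\binom{|Y|k/|X|}{2}$ using convexity. Your ordered count with Cauchy--Schwarz is the same computation. The inequality is proved correctly, and for $\mu>0$ both directions of the equality characterization are correct.

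Your closing remark about $\mu=0$ is wrong, though. We have $N=\mu\cdot\#\{(y_1,y_2): d(y_1,y_2)=2\}$, so tightness of your $Y$-side bound says that $\mu$ times the number of ordered pairs \emph{not} at distance $2$ is zero. That forces every pair to be at distance $2$ only when $\mu>0$. When $\mu=0$ the bound is always tight because $N=0$. At the same time, the hypothesis rules out any pair at distance $2$, since such a pair has at least one common neighbour.

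Concretely, take a perfect matching between $X=\{x_1,x_2\}$ and $Y=\{y_1,y_2\}$. It satisfies the hypotheses with $k=1$ and $\mu=0$, it has $s_1=s_2$, and it gives equality $0=0$. Yet $d(y_1,y_2)\neq 2$.

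So the ``only if'' direction needs $\mu>0$ (or $|Y|\le 1$). The paper's proof tacitly assumes this as well. That is harmless there, because the lemma is only applied with $\mu=\frac{k-1}{2}\ge 2$. You should state the assumption $\mu>0$ rather than claim the degenerate case goes through.
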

\proof
Let $W = \{(x, y_1, y_2) \mid x \in X, y_i \in Y, x \sim y_i \text{ for } i = 1, 2\}$. Then we have
\[
\binom{|Y|}{2} \mu \geq |W| = \sum_{i=1}^{|X|} \binom{s_i}{2} \geq |X|  \binom{\frac{s_1 + \cdots + s_{|X|}}{|X|}}{2}  = |X| \binom{ \frac{|Y| \cdot k}{|X|}}{2} ,
\]
that is, $(|Y| - 1)\mu \geq k \left( \frac{|Y| \cdot k}{|X|} - 1 \right)$, with equality if and only if $s_1 = s_2 = \cdots = s_{|X|}$ and every pair of vertices in $Y$ has distance $2$.
\qed

The following result can be seen as a generalization of~\cite[Theorem 5.4.1]{BCN}.

\begin{lemma}\label{c3-c2}
Let $\Gamma$ be a sesqui-regular graph with parameters $(v, k, \mu)$, where $\mu \geq 2$. 
Suppose there exists a path $x_1 \sim x_2 \sim x_3 \sim x_4 \sim x_5$ such that $d(x_1,x_5)=4$ and the vertices $x_2, x_3, x_4$ are contained in an induced quadrangle. 
Then $c_3(x_2, x_5) + c_3(x_4, x_1) \geq 3\mu$.
\end{lemma}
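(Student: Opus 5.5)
We will prove the inequality by exhibiting enough vertices in each of the two sets $C_3(x_2,x_5)=\Gamma_2(x_2)\cap\Gamma(x_5)$ and $C_3(x_4,x_1)=\Gamma_2(x_4)\cap\Gamma(x_1)$, using only the defining property of sesqui-regularity: any two vertices at distance $2$ have exactly $\mu$ common neighbours.

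First we fix the distances. Since $d(x_1,x_5)=4$, the path $x_1\sim x_2\sim x_3\sim x_4\sim x_5$ is a geodesic. Hence $d(x_2,x_5)=3$, $d(x_4,x_1)=3$ and $d(x_3,x_5)=d(x_3,x_1)=2$, so the two $c_3$ values are defined. Because $x_2,x_3,x_4$ lie in an induced quadrangle and $x_2\sim x_3\sim x_4$, the fourth vertex $y$ of that quadrangle satisfies $y\sim x_2$, $y\sim x_4$, $y\not\sim x_3$ and $x_2\not\sim x_4$. From this we read off $d(y,x_3)=2$. We also get $d(y,x_5)=2$: it is at most $2$ via $x_4$, and at least $2$ since $y\sim x_2$ and $d(x_2,x_5)=3$. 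Symmetrically, $d(y,x_1)=2$.

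Next we produce the four sets. Put $M=\Gamma(x_3)\cap\Gamma(x_5)$ and $N=\Gamma(y)\cap\Gamma(x_5)$; each has exactly $\mu$ elements. Every $z\in M\cup N$ is adjacent to $x_5$, so $d(x_2,z)\ge 2$ because $d(x_2,x_5)=3$. It also satisfies $d(x_2,z)\le 2$, via $x_3$ or via $y$. Hence $M\cup N\subseteq C_3(x_2,x_5)$, and so
\[
c_3(x_2,x_5)\ \ge\ 2\mu-|M\cap N|.
\]
In the same way, with $M'=\Gamma(x_3)\cap\Gamma(x_1)$ and $N'=\Gamma(y)\cap\Gamma(x_1)$, we get $c_3(x_4,x_1)\ge 2\mu-|M'\cap N'|$.

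The one point that needs an idea is bounding the overlaps, and we do it as follows. Both $M\cap N$ and $M'\cap N'$ lie inside $\Gamma(x_3)\cap\Gamma(y)$, which has exactly $\mu$ elements since $d(x_3,y)=2$. They are disjoint: a common element would be adjacent to both $x_1$ and $x_5$, contradicting $d(x_1,x_5)=4$. Therefore $|M\cap N|+|M'\cap N'|\le\mu$, and adding the two lower bounds gives
\[
c_3(x_2,x_5)+c_3(x_4,x_1)\ \ge\ 4\mu-\mu=3\mu.
\]
As a consistency check, $x_4\in M\cap N$ and $x_2\in M'\cap N'$, so both overlaps are nonempty, which is compatible with $\mu\ge 2$.
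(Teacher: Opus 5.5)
Your proof is correct and follows the paper's argument essentially step for step. You take the fourth vertex $y$ of the quadrangle and use $C_2(x_3,x_5)\cup C_2(y,x_5)\subseteq C_3(x_2,x_5)$ with inclusion--exclusion (and likewise at $x_1$); you then observe that the two triple intersections are disjoint subsets of the $\mu$-set $\Gamma(x_3)\cap\Gamma(y)$ because $d(x_1,x_5)=4$. The only difference is that you also justify the distances $d(y,x_1)=d(y,x_3)=d(y,x_5)=2$, which the paper simply asserts.
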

\proof
Assume that $x_2 \sim x_3 \sim x_4 \sim y \sim x_2$ forms an induced quadrangle. 
Then we have $d(y,x_1) = d(y,x_3) = d(y,x_5) = 2$.
Since $C_2(x_3, x_5) \cup C_2(y, x_5) \subseteq C_3(x_2, x_5)$, the inclusion--exclusion principle yields 
\[
    |C_2(x_3, x_5) \cap C_2(y, x_5)| \geq |C_2(x_3, x_5)| + |C_2(y, x_5)| - |C_3(x_2, x_5)| = 2\mu - c_3(x_2, x_5). 
\]
Hence $|\Gamma(x_3) \cap \Gamma(y) \cap \Gamma(x_5)| \geq 2\mu - c_3(x_2, x_5)$.

By the same argument, $|\Gamma(x_3) \cap \Gamma(y) \cap \Gamma(x_1)| \geq 2\mu - c_3(x_4, x_1)$. 
Since $d(x_1, x_5) = 4$, the sets $\Gamma(x_3) \cap \Gamma(y) \cap \Gamma(x_5)$ and $\Gamma(x_3) \cap \Gamma(y) \cap \Gamma(x_1)$ are disjoint, and both are contained in $\Gamma(x_3) \cap \Gamma(y)$. 
Therefore, $\mu \geq (2\mu - c_3(x_2, x_5)) + (2\mu - c_3(x_4, x_1))$, which implies $c_3(x_2, x_5) + c_3(x_4, x_1) \geq 3\mu$.
\qed

The following lemma shows that, for the problem under consideration, we always have $\lambda = 0$ and $d \leq 5$.

\begin{lemma}\label{lambda=0}
Let $\Gamma$ be a connected amply regular graph with diameter $d \geq 4$ and parameters  
$(v, k, \lambda, \mu)$, where $\mu = \frac{k - 1}{2}$ and $k \geq 5$ is odd. 
Then $\lambda = 0$ and $d \leq 5$.
\end{lemma}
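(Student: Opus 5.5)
Fix a shortest path $x_0 \sim x_1 \sim x_2 \sim x_3 \sim x_4$ with $d(x_0,x_4)=4$; such a path exists because $d\ge 4$. The first goal is $\lambda=0$; the diameter bound will then follow from a counting argument.

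\textbf{Step 1: $\lambda=0$.} Assume $\lambda\ge 1$. The sets $C_2(x_0,x_2)$ and $C_2(x_4,x_2)$ are disjoint subsets of $\Gamma(x_2)$, since a common element would give $d(x_0,x_4)\le 2$. Each has size $\mu=\frac{k-1}{2}$, so together they use $k-1$ of the $k$ neighbours of $x_2$. Thus exactly one neighbour $w$ of $x_2$ lies outside both sets. The plan is to show that the $\lambda$ common neighbours of the edge $x_1x_2$ force a vertex into $C_2(x_0,x_2)\cap C_2(x_4,x_2)$ or produce too many extra neighbours.

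Concretely, $x_1\in C_2(x_0,x_2)$. The common neighbours of $x_1$ and $x_2$ lie in $\Gamma(x_2)\setminus(C_2(x_4,x_2)\cup\{w\})$, because a vertex adjacent to $x_1$ is at distance at least $2$ from $x_4$, and hence lies in $C_2(x_0,x_2)$. Symmetrically, the common neighbours of $x_2x_3$ lie in $C_2(x_4,x_2)$.

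Next I would use Lemma~\ref{b1geqk}, namely $b_1\ge\frac{k+1}{3}$, together with parity. The map $y\mapsto x_2$ shows that every neighbour $u$ of $x_2$ in $C_2(x_0,x_2)$ has all of its $\lambda$ common neighbours with $x_2$ inside $C_2(x_0,x_2)\cup\{w\}$. So the subgraph induced on $\Gamma(x_2)$ has no edges between $C_2(x_0,x_2)$ and $C_2(x_4,x_2)$, and it is $\lambda$-regular. Then $w$ has at most $\lambda$ neighbours, split between the two halves.

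Now count the edges between $\Gamma(x_2)\cap\Gamma_2(x_0)$ and the remaining vertices, using the $\mu$-condition for $x_0$ and vertices of $\Gamma_2(x_0)\cap\Gamma(x_2)$. Combining this count with the fact that each of the two halves has odd-or-even size $\frac{k-1}{2}$ and is nearly $\lambda$-regular, I expect a contradiction. Note that $\lambda$-regularity on $\frac{k-1}{2}$ vertices requires $\lambda\le\frac{k-3}{2}$, and the vertex $w$ must absorb the degree deficits.

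If this local approach fails, I would fall back on Lemma~\ref{mubound} applied to $X=\Gamma(x_2)$ and $Y=\Gamma_2(x_2)$ near the path to derive a common-neighbour count larger than $\mu$. I expect this step to be the main obstacle.

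\textbf{Step 2: $d\le 5$.} With $\lambda=0$ the graph has no triangles. Suppose $d\ge 6$ and take a path $x_0\sim\cdots\sim x_6$ with $d(x_0,x_6)=6$. At $x_2$, the disjoint sets $C_2(x_0,x_2)$ and $C_2(x_4,x_2)$ leave one spare neighbour; the same holds at $x_3$ and at $x_4$.

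I would look for an induced quadrangle through $x_2,x_3,x_4$. Since $\mu\ge 2$, the pair $x_2,x_4$ has a second common neighbour $y\ne x_3$, and because the graph is triangle-free the quadrangle $x_2x_3x_4y$ is induced. Applying Lemma~\ref{c3-c2} to the path $x_1\sim\cdots\sim x_5$ gives $c_3(x_2,x_5)+c_3(x_4,x_1)\ge 3\mu$. Doing the same along $x_2\sim\cdots\sim x_6$ gives further such bounds.

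The remaining task is to bound $c_3$ from above. At $x_5$, the sets $C_3(x_2,x_5)$ and $C_2(\cdot)$ coming from the far end, namely neighbours of $x_5$ closer to $x_6$'s side, must be disjoint. This yields $c_3(x_2,x_5)\le k-\mu=\frac{k+1}{2}$, and similarly for the other term. Then $3\mu=\frac{3(k-1)}{2}\le k+1$, which forces $k\le 5$. The case $k=5$, $\mu=2$ would then be handled by a direct small check that equality propagates to a contradiction at distance $6$.
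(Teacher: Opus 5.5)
Step 1, which is the heart of the lemma, is not established. The inference that every common neighbour of $x_1$ and $x_2$ lies in $C_2(x_0,x_2)$ is unjustified. Adjacency to $x_1$ only excludes $C_2(x_4,x_2)$, so all you get is $\Gamma(x_1,x_2)\subseteq C_2(x_0,x_2)\cup\{w\}$; your version would force $\Gamma(x_1,x_2)=\Gamma(x_0,x_1)$.

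More importantly, the local data at $x_2$ cannot yield a contradiction. Write $A=C_2(x_0,x_2)$ and $B=C_2(x_4,x_2)$. What you know is that $\Gamma(x_2)=A\cup B\cup\{w\}$ induces a $\lambda$-regular graph with no $A$--$B$ edges. This does give $\lambda\le\mu-1$: a vertex $a\in A$ has its $\lambda$ local neighbours among the $\mu$ vertices of $(A\setminus\{a\})\cup\{w\}$. Equality would force every vertex of $A\cup B$ to be adjacent to $w$, giving $w$ local degree $k-1\neq\lambda$. That is a pleasant elementary substitute for the paper's appeal to \cite[Theorem~1.5.5]{BCN}, but nothing more follows. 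For $k=7$ and $\lambda=2$, letting $A\cup\{w\}$ induce a $4$-cycle and $B$ a triangle satisfies every constraint you list, parity included. The unspecified edge count and the fallback to Lemma~\ref{mubound} do not close this.

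What is missing is the matching lower bound $\lambda\ge\mu-1$, and for it $x_2$ must be chosen after an auxiliary vertex. Fix $y'\in\Gamma(x_0,x_1)$. If every vertex of $\Gamma(x_1,x_3)$ were adjacent to $y'$, then $\Gamma(x_1,y')\supseteq\Gamma(x_1,x_3)\cup\{x_0\}$, giving $\lambda\ge\mu+1$, contrary to the upper bound. So pick $x_2\in\Gamma(x_1,x_3)$ with $x_2\not\sim y'$. Then $d(y',x_2)=2$, and no common neighbour of $y'$ and $x_2$ lies in $B$ (else $d(x_0,x_4)\le 3$). Hence at least $\mu-1$ of them lie in $A\subseteq\Gamma(x_0)$, so $\lambda=|\Gamma(x_0,y')|\ge\mu-1$. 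Thus $\lambda=\mu-1$ and $b_1=\frac{k+1}{2}$. Counting edges between $\Gamma(x_0)$ and $\Gamma_2(x_0)$ then gives $|\Gamma_2(x_0)|=\frac{k(k+1)}{k-1}=k+2+\frac{2}{k-1}$, which is not an integer for $k\ge 5$.

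Step 2 is also incomplete. Your inequality only yields $k\le 5$, and the tight case $k=5$, $\mu=2$ is left to an unspecified check. Also, a geodesic of length $6$ supplies no vertex at distance $5$ from $x_2$. The upper bounds should therefore be obtained as $c_3(x_2,x_5)=c_3(x_5,x_2)\le k-\mu$ via $x_0$, and $c_3(x_4,x_1)=c_3(x_1,x_4)\le k-\mu$ via $x_6$. The paper sidesteps this entirely by citing \cite[Corollary~1.9.2]{BCN}, which for $\lambda=0$ gives $d\le k+4-2\mu=5$ directly.
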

\proof
If $\lambda = 0$, then by~\cite[Corollary~1.9.2]{BCN}, we have $d \leq k + 4 - 2\left(\frac{k - 1}{2}\right) = 5$. Therefore, it suffices to prove that $\lambda = 0$.

First, we have $\lambda \leq \mu - 1$. Otherwise, by \cite[Theorem 1.5.5]{BCN}, we have $k > \lambda + \mu + 1 \geq 2\mu + 1 = k$, a contradiction.

In the following we assume that $\lambda > 0$.

Let $x, y, w, z, y'$ be distinct vertices such that $d_{\Gamma}(x, z) = 4$, $y \in \Gamma(x) \cap \Gamma_3(z)$, $w \in \Gamma_2(y) \cap \Gamma(z)$, and $y'$ is a common neighbor of $x$ and $y$. 
If $\Gamma(y,w) \subseteq \Gamma(y')$, then $\lambda \geq \mu + 1$, a contradiction. Thus, there exists a vertex $v \in \Gamma(y,w) \setminus \Gamma(y')$.
Now we have $d(x,v)=d(z,v)=2$. As $\mu=\frac{k-1}{2}$, we may assume that $\Gamma(v)=\Gamma(x,v)\cup \Gamma(z,v)\cup \{v'\}$ for some vertex $v' \notin \Gamma(x)\cup \Gamma(z)$. Note that there are no edges between $\Gamma(x, v)$ and $\Gamma(z, v)$. Since $|\Gamma(x, y)| = |\Gamma(y, v)| = \lambda$ and $\Gamma(y, v) \subseteq \Gamma(x, v) \cup \{v'\}$, it follows that $y \sim v'$, and hence $v' \in \Gamma(y, v)$. As $d_{\Gamma}(y', v) = 2$, the common neighbors of $y'$ and $v$ must lie in $\Gamma(x, v) \cup \{v'\}$; otherwise, we would have $d_{\Gamma}(x, z) = 3$, contradicting our assumption. Therefore, $y'$ is adjacent to $\mu$ vertices in $\Gamma(x, v) \cup \{v'\}$, which implies that $\lambda \geq \mu - 1$.

Therefore we have $\lambda = \mu - 1$. Now, each vertex in $\Gamma(x)$ has $\frac{k + 1}{2}$ neighbors in $\Gamma_2(x)$, and each vertex in $\Gamma_2(x)$ has $\frac{k + 1}{2}$ neighbors in $\Gamma(x)$. Let $k_2 = |\Gamma_2(x)|$. Counting the edges between $\Gamma(x)$ and $\Gamma_2(x)$ in two ways gives $k(k + 1) = (k - 1)k_2$. It is easy to verify that $k_2 \neq k + 2$. Therefore, we may assume $k_2 = k + i$ for some integer $i \geq 3$. Then we obtain $\frac{i}{i - 2} = k \geq 5$, which implies $10 \geq 4i$, a contradiction.
Hence, we conclude that $\lambda = 0$, completing the proof of Lemma~\ref{lambda=0}.
\qed

\vspace{0.2cm}

By Lemma~\ref{lambda=0}, we only need to consider the case $\lambda = 0$.

The following lemma indicates that, for the problem under consideration, if the amply regular graph is not bipartite, then we may instead consider its bipartite double, which is easier to handle.

\begin{lemma}[{\cite[Theorem 1.11.1]{BCN}}]\label{bipartite-double}
Let $\Gamma$ be a connected amply regular graph with parameters $(v, k, 0, \mu)$ that is not bipartite. Then its bipartite double is a connected amply regular graph with parameters $(2v, k, 0, \mu)$.
\end{lemma}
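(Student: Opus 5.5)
The plan is to build the bipartite double $\Gamma'$ explicitly and check its three defining properties in turn: regularity of valency $k$, the absence of triangles ($\lambda=0$), and constant $\mu$ for pairs at distance $2$. Connectivity is dealt with last.

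Let $\Gamma'$ be the bipartite double, with vertices $x^+,x^-$ and $x^\delta\sim y^\varepsilon$ exactly when $x\sim y$ and $\delta\neq\varepsilon$.

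\emph{Regularity and $\lambda=0$.} Each $x^\delta$ has neighbours $\{y^{-\delta}: y\in\Gamma(x)\}$, so $\Gamma'$ is $k$-regular on $2v$ vertices. Since $\Gamma'$ is bipartite, it has no triangles, so adjacent vertices have $0$ common neighbours and $\lambda(\Gamma')=0$.

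\emph{Common neighbours.} Two vertices of $\Gamma'$ at distance $2$ lie in the same part, so they are $x^\delta,y^\delta$ with $x\neq y$. Their common neighbours are exactly the $z^{-\delta}$ with $z\in\Gamma(x)\cap\Gamma(y)$. Moreover $d_{\Gamma'}(x^\delta,y^\delta)=2$ holds iff $x,y$ have a common neighbour in $\Gamma$. Because $\lambda=0$ in $\Gamma$, adjacent $x,y$ have no common neighbour. Hence $x,y$ have a common neighbour iff $d_\Gamma(x,y)=2$, in which case they have exactly $\mu$ of them. So every pair at distance $2$ in $\Gamma'$ has exactly $\mu$ common neighbours, and $\Gamma'$ is amply regular with parameters $(2v,k,0,\mu)$.

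\emph{Connectivity.} This is the only step that needs the non-bipartite hypothesis. A walk $x=w_0\sim w_1\sim\dots\sim w_m=y$ in $\Gamma$ lifts to a walk from $x^\delta$ to $y^{\delta'}$ in $\Gamma'$ with $\delta'=\delta(-1)^m$. Since $\Gamma$ is connected and not bipartite, it contains an odd cycle. Concatenating a path to that cycle, going once around it, and returning gives a closed walk of odd length through $x$. This lets us adjust the parity of $m$ at will, so $x^\delta$ reaches both $y^+$ and $y^-$ for every $y$. Therefore $\Gamma'$ is connected.
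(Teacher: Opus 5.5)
Your proof is correct and complete. The paper gives no argument of its own here, only the citation \cite[Theorem~1.11.1]{BCN}; your direct check is the standard verification behind it, and it correctly places the hypotheses: $\lambda=0$ in $\Gamma$ stops adjacent pairs $x\sim y$ from becoming distance-$2$ pairs $x^\delta,y^\delta$ with $\lambda$ rather than $\mu$ common neighbours, and non-bipartiteness is needed only for connectivity, via an odd closed walk.
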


In Theorem~\ref{diameter5} below, we first consider the case when the diameter $d = 5$.

\begin{theo}\label{diameter5}
Let $\Gamma$ be a connected amply regular graph with diameter $d = 5$ and parameters $(v, k, 0, \mu)$, where $\mu = \frac{k - 1}{2}$ and $k \geq 5$ is odd. Then $\Gamma$ is the 5-cube.
\end{theo}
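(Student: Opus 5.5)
We take $\Gamma$ to be triangle-free, since $\lambda=0$ is already given in the statement and is in any case forced by Lemma~\ref{lambda=0}. The goal is to show that a triangle-free graph with $\mu=\frac{k-1}{2}$ and $d=5$ is so tight that $k$ must equal $5$, and that the graph is then forced to be the $5$-cube.

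\textbf{Step 1: reduce to the bipartite case.} If $\Gamma$ is not bipartite, we pass to its bipartite double $\widetilde{\Gamma}$, using Lemma~\ref{bipartite-double}. Its parameters are $(2v,k,0,\mu)$ and its diameter is at least $5$. By Lemma~\ref{lambda=0}, its diameter is at most $5$, so it is exactly $5$. We first classify the bipartite case and then come back to the non-bipartite one. The $5$-cube is the bipartite double of the folded $5$-cube, but the folded $5$-cube has diameter $2$, so it does not arise here. Hence we must show directly that a non-bipartite $\Gamma$ with $d=5$ whose double is the $5$-cube cannot exist. For this we use that $d(\widetilde{\Gamma})=5$ forces $\Gamma$ to have no odd cycles of length at most $9$, which is incompatible with $v=16$.

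\textbf{Step 2: the bipartite case, via distance counting.} Fix $x$ and a vertex $z\in\Gamma_5(x)$, and take a geodesic $x=x_0\sim x_1\sim\cdots\sim x_5=z$. Since $\lambda=0$ and $\mu=\frac{k-1}{2}\geq 2$, the vertices $x_1,x_2,x_3$ lie in an induced quadrangle. The same holds for $x_2,x_3,x_4$. Applying Lemma~\ref{c3-c2} to the paths $x_0\cdots x_4$ and $x_1\cdots x_5$, both of which join vertices at distance $4$, gives
\[
c_3(x_1,x_4)+c_3(x_3,x_0)\geq 3\mu .
\]
Because $c_3\leq k=2\mu+1$, each $c_3$ value along a geodesic is at least $\mu-1$. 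Next we bound $c_4$ and $c_5$ from below. Every vertex in $\Gamma_5(x)$ has all its neighbours in $\Gamma_4(x)$, since the graph is bipartite and $d=5$. Combining this with the $c_3$ bounds, we aim to prove $c_4\geq k-1$ and $c_5=k$.

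\textbf{Step 3: count vertices and identify the graph.} We count the levels $|\Gamma_i(x)|$ using $b_1=k-1$, $c_2=\mu$, and the bounds from Step~2. Lemma~\ref{equalitycase} is applied with $Y=\Gamma_2(x)$ and $X=\Gamma_3(x)$, or with $Y=\Gamma_4(x)\cup\Gamma_5(x)$. Together with the requirement that pairs in $Y$ be at distance $2$, this should force equality. Equality then makes $\Gamma$ distance-regular with intersection array
\[
\{k,k-1,k-\mu,\dots;1,\mu,\dots\},
\]
with $c_3\geq \mu+1$, so that $c_3>k/3$ in particular. At that point the Koolen--Park result \cite{KP2012DCC} applies: a distance-regular graph with $d\geq4$ and $c_2>k/3$ is a Hadamard graph or the $5$-cube. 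Hadamard graphs have diameter $4$, so $\Gamma$ is the $5$-cube, and consequently $k=5$.

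\textbf{Main obstacle.} The hard part is Step~2/3: upgrading the local inequality from Lemma~\ref{c3-c2} into uniform values of $c_3$, $c_4$ and $c_5$, since amply regular graphs give no a priori regularity at distance $\geq3$. The first approach to try is a global double count over all geodesics of length $5$ starting at $x$, summing the inequality from Lemma~\ref{c3-c2}. This should be combined with the bound $v\leq$ (level sizes) coming from Lemma~\ref{equalitycase} applied to $\Gamma_4(x)$, with the aim of forcing $k=5$ directly. Once $k=5$ and $\mu=2$, the graph is a $(0,2)$-graph of valency $5$ and diameter $5$, and known results (\cite[Theorem~1.13.2]{BCN}, or \cite{Brouwer-2006}) give $v\leq 2^5$ with equality only for the $5$-cube.
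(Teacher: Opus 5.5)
\textbf{There is a genuine gap, and it sits where you flag the main obstacle.} Steps~2--3 never determine the level sizes $|\Gamma_i(x)|$, so distance-regularity is never established. Once distance-regularity is known, Koolen--Park would be a valid replacement for the paper's folded-graph and smallest-eigenvalue-$(-2)$ argument. It needs $c_2=\frac{k-1}{2}>\frac{k}{3}$, which holds automatically for $k\ge5$; the $c_3$ condition you mention is irrelevant. But nothing in your proposal gets you to distance-regularity.

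Your use of Lemma~\ref{c3-c2} along a geodesic $x_0\sim\cdots\sim x_5$ only gives $c_3\ge\mu-1$. Both terms $c_3(x_1,x_4)$ and $c_3(x_3,x_0)$ are bounded above only by $k$, so this is weaker than the elementary bound $c_3(x,y)\ge\mu+1$ from \cite[Proposition~1.9.1]{BCN}. Applying Lemma~\ref{equalitycase} with $Y=\Gamma_2(x)$ and $X=\Gamma_3(x)$ does not fit that lemma's hypotheses either. Two vertices of $\Gamma_2(x)$ also have common neighbours in $\Gamma_1(x)$, and they may be at distance $4$.

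\textbf{The missing idea is to apply Lemma~\ref{c3-c2} so that one of the two terms is structurally small.} Take a path $w_1\sim w_2\sim w_3\sim z\sim w_5$ with $w_i\in\Gamma_i(x)$, $z\in\Gamma_4(x)$ and $w_5\in\Gamma_5(x)$. Use the symmetry $c_3(u,v)=c_3(v,u)$, which follows by counting edges between $C_3(u,v)$ and $C_3(v,u)$: each vertex on either side has exactly $\mu$ neighbours on the other. Also $C_3(w_5,w_2)\subseteq\Gamma(w_2)\cap\Gamma_3(x)$, a set of size $\frac{k+1}{2}$. Together these give $c_4(x,z)\ge c_3(w_1,z)\ge\frac{3(k-1)}{2}-\frac{k+1}{2}=k-2$ for every $z\in\Gamma_4(x)$.

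\textbf{The counting then finishes the argument.} The bound $c_3(x,y)\ge\mu+1$ gives $k_3\le k_2=2k$ and $b_3(x,y)\le\mu$. Double counting the edges between $\Gamma_3(x)$ and $\Gamma_4(x)$ then gives $k(k-1)\ge k_4(k-2)$. Two vertices in $\Gamma_5(x)$ would force $k_4\ge 2k-\mu=\frac{3k+1}{2}$, which is impossible for $k\ge5$, so $k_5=1$. Bipartite balance gives $k_3=k+k_4$. Combined with $k_3\le 2k$ and $k_4\ge k$, this forces $k_3=2k$ and $k_4=k$, and hence distance-regularity.

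\textbf{Your Step~1 justification is also false.} If $\widetilde\Gamma$ has diameter $5$, then $d(x^+,x^-)\le5$, so $\Gamma$ has an odd cycle of length at most $5$, not none of length up to $9$. The non-bipartite case is immediate anyway. If $d_\Gamma(x,y)=5$, then $d_{\widetilde\Gamma}(x^+,y^+)$ is even and at least $5$, hence at least $6$, contradicting Lemma~\ref{lambda=0} for $\widetilde\Gamma$. Alternatively, $v=16$, $k=5$, $\mu=2$ give $1+5+10=16$, so the diameter would be $2$.
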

\proof
If $\Gamma$ is not bipartite, then by Lemmas~\ref{lambda=0} and~\ref{bipartite-double}, the bipartite double of $\Gamma$ is a connected amply regular graph with diameter $5$ and parameters $\left(2v, k, 0, \frac{k - 1}{2}\right)$. Therefore, we may first consider the case where $\Gamma$ is bipartite.

For a fixed vertex $x$, let $k_i = |\Gamma_i(x)| = |\{ y \in V(\Gamma) \mid d_\Gamma(x, y) = i \}|$ for $0 \leq i \leq 5$. Let $\mathcal{B} = \{\Gamma_0(x), \Gamma_1(x), \Gamma_2(x), \Gamma_3(x), \Gamma_4(x), \Gamma_5(x)\}$ be a partition of the vertex set of $\Gamma$. This yields the distribution diagram of $\Gamma$ with respect to $\mathcal{B}$, as shown in Figure~\ref{fig:AA1}. In particular, we have $k_2 = 2k$.

\begin{figure}[ht]
    \centering
    \includegraphics[width=\textwidth]{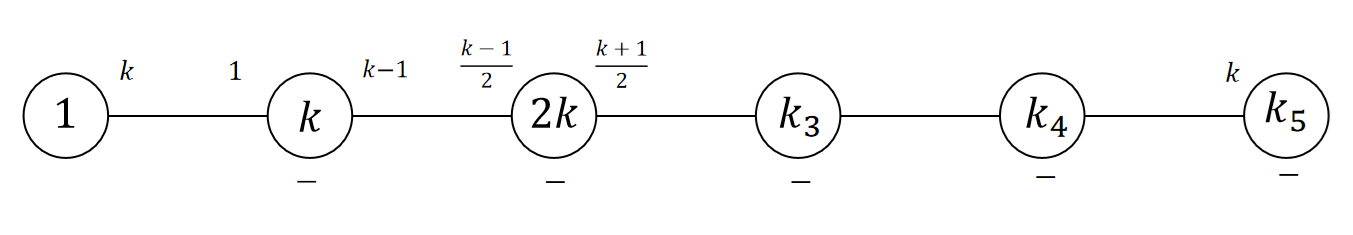} 
    \caption{The distribution diagram}
    \label{fig:AA1}
\end{figure}

For each $y \in \Gamma_3(x)$, by~\cite[Proposition~1.9.1]{BCN} we have 
$c_3(x,y) \geq c_2 + 1 = \frac{k+1}{2}$, and hence $k_3 \leq k_2 = 2k$. 
Furthermore, $b_3(x,y) \leq \frac{k-1}{2}$.

For each $z \in \Gamma_4(x)$, if $\Gamma(z) \cap \Gamma_5(x) = \emptyset$, then $c_4(x,z)=k$.
Otherwise, there exist vertices $w_i \in \Gamma_i(x)$ for $i=1,2,3,5$ such that
$w_1 \sim w_2 \sim w_3 \sim z \sim w_5$ is a path of length $4$ and $d(w_1,w_5)=4$.
As $\mu \geq 2$ and $\Gamma$ is bipartite, Lemmas~\ref{c3-c2} applies, and hence
\begin{align}\label{c3+c3}
    c_3(w_2,w_5)+c_3(z,w_1)\geq 3\cdot \frac{k-1}{2}.
\end{align}

For each pair of vertices $v_1, v_2$ at distance $3$, by counting the edges between $C_3(v_1,v_2)$ and $C_3(v_2,v_1)$, we have $c_3(v_1,v_2) = c_3(v_2,v_1)$. 
Therefore, by \eqref{c3+c3}, we obtain
\[
c_3(w_5,w_2) + c_3(w_1,z)  \geq 3 \cdot \frac{k-1}{2}.
\]
Note that $C_3(w_5,w_2)\subseteq \Gamma_3(x)$ and $w_2$ has $\frac{k+1}{2}$ neighbours in $\Gamma_3(x)$.
Thus $c_3(w_5,w_2)\leq \frac{k+1}{2}$, and hence
\[
c_3(w_1,z)\geq 3\cdot \frac{k-1}{2}-\frac{k+1}{2}=k-2.
\]
As $C_3(w_1,z)\subseteq C_4(x,z)$, we obtain $c_4(x,z)\geq k-2$.
Therefore $c_4(x,z)\geq k-2$ for each $z\in \Gamma_4(x)$.

Since $b_3(x,y)\leq \frac{k-1}{2}$ for each $y\in \Gamma_3(x)$, $c_4(x,z)\geq k-2$ for each $z\in \Gamma_4(x)$, and $k_3\leq 2k$, by double counting the edges between $\Gamma_3(x)$ and $\Gamma_4(x)$ we obtain
\begin{align}\label{k4bound}
    2k\cdot \frac{k-1}{2}\geq k_4\cdot (k-2).
\end{align}

If $k_5 \geq 2$, then as two vertices in $\Gamma_5$ have at most $\mu$ common neighbours in $\Gamma_4$, we have $k_4 \geq k + (k-\mu) = k + \frac{k+1}{2}$. 
Combining this with \eqref{k4bound}, we obtain 
\[
k(k-1) \geq \left(\frac{3k+1}{2}\right)(k-2),
\] 
which is impossible as $k \geq 5$.

Thus, $k_5 = 1$. Since $\Gamma$ is a bipartite regular graph, we have $k_0 + k_2 + k_4 = k_1 + k_3 + k_5$. Given that $k_3 \leq k_2 = 2k$ and $k_4 \geq k$, we must have $k_3 = 2k$ and $k_4 = k$. Therefore, we obtain the distribution diagram in Figure~\ref{fig:AA2}. In particular, $\Gamma$ is a distance-regular graph with intersection array $\left(k, k - 1, \frac{k+1}{2}, \frac{k-1}{2}, 1;\ 1, \frac{k-1}{2}, \frac{k+1}{2}, k - 1, k\right)$.

\begin{figure}[ht]
    \centering
    \includegraphics[width=\textwidth]{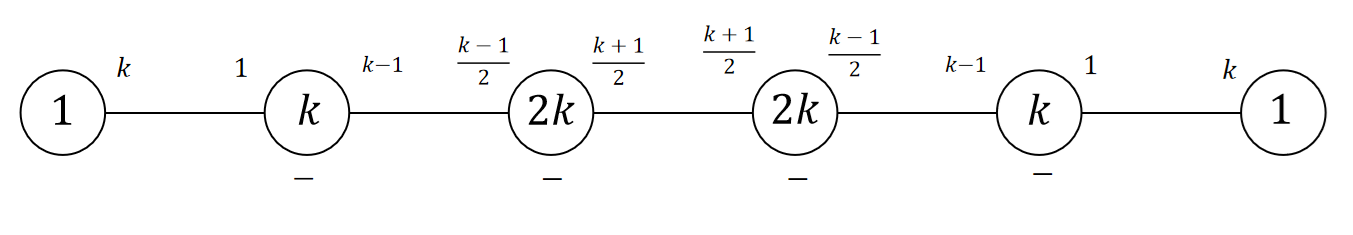} 
    \caption{The distribution diagram}
    \label{fig:AA2}
\end{figure}

The folded graph $\overline{\Gamma}$ is a strongly regular graph with parameters $\left(3k + 1, k, 0, \frac{k - 1}{2} \right)$. Its complement $\overline{\Gamma}^c$ is a strongly regular graph with parameters $\left(3k + 1, 2k, \frac{3}{2}(k - 1), k + 1\right)$. For a strongly regular graph with parameters $(v, k, \lambda, \mu)$, the nontrivial eigenvalues $\theta_1$ and $\theta_2$ (i.e., $\theta_i \neq k$) satisfy the relations $-\theta_1 \theta_2 = k - \mu$ and $\theta_1 + \theta_2 = \lambda - \mu$. Therefore, for $\overline{\Gamma}^c$, we have $\theta_1 = \frac{k - 1}{2}$ and $\theta_2 = -2$.

Since all strongly regular graphs with smallest eigenvalue $-2$ are known (see~\cite[p.5]{BM-2022}), it follows that $\overline{\Gamma}^c$ is the Clebsch graph with parameters $(16, 10, 6, 6)$. Thus, $k = 5$, and $\overline{\Gamma}$ is a strongly regular graph with parameters $(16, 5, 0, 2)$, i.e., the folded $5$-cube (see~\cite[Table~1.1]{BM-2022}). Hence, $\Gamma$ is the $5$-cube.

In conclusion, we have shown that a connected bipartite amply regular graph with diameter $d = 5$ and parameters $\left(v, k, 0, \frac{k - 1}{2}\right)$, where $k \geq 5$, must be the $5$-cube. Since the $5$-cube is not the bipartite double of any connected amply regular graph with parameters $(16, 5, 0, 2)$ and diameter $5$, Theorem~\ref{diameter5} follows.
\qed

\vspace{0.2cm}

We now consider the case $d = 4$.

\begin{theo}\label{diameter4}
Let $\Gamma$ be a connected amply regular graph with diameter $d = 4$ and parameters $(v, k, 0, \mu)$, where $\mu = \frac{k - 1}{2}$ and $k \geq 5$ is odd. Let $\Lambda$ be the unique bipartite $(0,2)$-graph with $14$ vertices and valency $4$, which is the point--block incidence graph of a square $2$-$(7,4,2)$ design.
Then either $\Gamma$ is isomorphic to $\K_2 \square \Lambda$, or $\Gamma$ is a bipartite $Q$-regular graph with distribution diagram shown in Figure~\ref{fig:D41}.
\end{theo}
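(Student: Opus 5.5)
First I will reduce to the bipartite case, as in the proof of Theorem~\ref{diameter5}. If $\Gamma$ is not bipartite, then by Lemma~\ref{bipartite-double} its bipartite double $\tilde\Gamma$ is amply regular with parameters $(2v,k,0,\mu)$. Its diameter is at least $4$, and by Lemma~\ref{lambda=0} it is at most $5$. If the diameter is $5$, Theorem~\ref{diameter5} says $\tilde\Gamma$ is the $5$-cube, which is not the bipartite double of a diameter-$4$ graph with these parameters (the only candidate, the folded $5$-cube, has diameter $2$). So the diameter is $4$, and the bipartite analysis below applies to $\tilde\Gamma$. At the end I will check that neither $\K_2\square\Lambda$ nor a Figure~\ref{fig:D41} graph is the bipartite double of a non-bipartite diameter-$4$ graph. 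For the second family, the antipode map $y\mapsto y'$ would have to be the swap $x^+\leftrightarrow x^-$. But then $x^+$ and $x^-$ would lie at distance $4$, which is even, while in a bipartite double they lie in different colour classes; this is a contradiction.

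Now let $\Gamma$ be bipartite, fix $x$, and set $k_i=|\Gamma_i(x)|$. Since every vertex of $\Gamma_2(x)$ has exactly $\mu$ neighbours in $\Gamma(x)$, we get $k_2=k(k-1)/\mu=2k$. Bipartiteness gives $1+k_2+k_4=k+k_3$, so $k_3=k_4+k+1$. For $y\in\Gamma_3(x)$ we have $c_3(x,y)\ge\mu+1=\frac{k+1}{2}$ by \cite[Proposition~1.9.1]{BCN}. Counting edges between $\Gamma_2(x)$ and $\Gamma_3(x)$ gives $2k\cdot\frac{k+1}{2}=\sum_{y\in\Gamma_3(x)}c_3(x,y)\ge k_3\frac{k+1}{2}$, hence $k_3\le 2k$ and $k_4\le k-1$. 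I then want to pin down $k_4$ using Lemma~\ref{equalitycase}, applied to the bipartite graph between $X=\Gamma_3(x)$ and $Y=\Gamma_4(x)$ together with $x$. More convenient is the bipartite graph with $Y=\Gamma_4(x)\cup\{x\}$ and $X=\Gamma(x)\cup\Gamma_3(x)$, where every vertex of $Y$ has valency $k$ into $X$. Any two vertices of $Y$ are at distance $2$ or $4$, and a pair at distance $2$ has exactly $\mu$ common neighbours. The inequality $(|Y|-1)\mu\ge k(|Y|k/|X|-1)$ with $|X|=k+k_3=2k+1+k_4$ and $|Y|=k_4+1$ should force either $k_4$ small or near-equality. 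Alternatively, I can count pairs $(z,z')$ in $\Gamma_4(x)$ sharing neighbours in $\Gamma_3(x)$.

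The generic outcome I aim for is $k_4=1$, $k_3=2k$ and $c_3(x,y)=\frac{k+1}{2}$ for all $y$. This gives the partition $\{x\},\Gamma(x),\Gamma_2(x),\Gamma_3(x),\Gamma_4(x)=\{x'\}$. To reach the six-cell diagram of Figure~\ref{fig:D41}, I split $\Gamma_3(x)$ into $\Gamma(x')$ ($k$ vertices) and the remaining $k$ vertices, and $\Gamma_2(x)$ into $\Gamma_2(x)\cap\Gamma_2(x')$ and the rest. The cell sizes and edge counts then follow from $\mu$-counting with respect to both $x$ and $x'$. The antipodality $|\Gamma_4(y)|=1$ for every $y$ makes the partition equitable with the same quotient matrix for every base vertex, which is exactly $Q$-regularity.

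The main obstacle is the case $k_4\ge2$, where I expect $\K_2\square\Lambda$ to arise ($k=5$, $v=28$, $k_4=3$). Here I would apply Lemma~\ref{c3-c2}. Take $z\in\Gamma_4(x)$ and a geodesic $w_1\sim w_2\sim w_3\sim z$ from $\Gamma(x)$. Any vertex at distance $4$ from $w_1$ adjacent to $z$ yields a path of the required type, since in a bipartite graph with $\mu\ge2$ consecutive vertices lie in a quadrangle. Lemma~\ref{c3-c2} then gives lower bounds on $c_3$ values, which, combined with the upper bound $c_3(w_5,w_2)\le b_2=\frac{k+1}{2}$, force $c_4(x,z)\ge k-2$ as before. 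Feeding this into the edge count between $\Gamma_3(x)$ and $\Gamma_4(x)$, where each vertex of $\Gamma_3(x)$ sends $b_3(x,y)\le\frac{k-1}{2}$ edges forward, gives $k_4(k-2)\le k_3\frac{k-1}{2}$ with $k_3=k_4+k+1$. This should bound $k$ to $k\le 5$ or so whenever $k_4\ge2$. The residual small case $k=5$, $\mu=2$ then needs a direct structural identification. There $\Gamma$ is a bipartite $(0,2)$-type graph on few vertices, and I would show it decomposes as $\K_2\square\Lambda$, for example by finding a perfect matching whose removal leaves two copies of a $4$-valent $(0,2)$-graph on $14$ vertices, which is unique by \cite[Table~1]{Brouwer-2006}.
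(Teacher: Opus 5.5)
\textbf{There is a genuine gap} in the step you flag as the main obstacle: showing $k_4=1$ unless $\Gamma\cong\K_2\square\Lambda$. Neither tool you propose gives any constraint in diameter $4$.

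Since $\Gamma$ is bipartite of diameter $4$, every $z\in\Gamma_4(x)$ has all $k$ neighbours in $\Gamma_3(x)$. So $c_4(x,z)=k$ holds trivially, and Lemma~\ref{c3-c2} adds nothing. The edge count between $\Gamma_3(x)$ and $\Gamma_4(x)$ then reads $k\,k_4\le(k_4+k+1)\frac{k-1}{2}$, which simplifies to $k_4\le k-1$. That is exactly the bound you already had from $k_3\le 2k$. With your weaker $c_4\ge k-2$ the bound is weaker still, and in no case does it bound $k$. In Theorem~\ref{diameter5} the leverage came from $k_5\ge 2$ forcing $k_4\ge\frac{3k+1}{2}$, and there is no analogue here.

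Your variant of Lemma~\ref{equalitycase}, with $Y=\Gamma_4(x)\cup\{x\}$ and $X=\Gamma(x)\cup\Gamma_3(x)$, is also vacuous. After clearing denominators, the left side minus the right side equals $k_4(k_4+1)(k-1)+2k(k+1)>0$. Adding $x$ and $\Gamma(x)$ simply dilutes the count.

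The missing idea is to apply the averaging bound to $X=\Gamma_3(x)$ and $Y=\Gamma_4(x)$ alone. Take Lemma~\ref{mubound} with $w=k$ and $n=k+k_4+1$, and use that two vertices of $\Gamma_4(x)$ share at most $\mu$ neighbours. Then $k_4\ge 2$ gives $\frac{k-1}{2}\ge\frac{k}{k+k_4+1}\bigl(k-\frac{k_4+1}{k_4-1}\bigr)$. This forces $k_4\ge k-5$ and leaves only $k_4=k-1$ and $(k,k_4)\in\{(5,2),(5,3),(7,2),(7,5)\}$.

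The case $k_4=k-1$ is excluded by the strict inequality in Lemma~\ref{equalitycase}. Here all $b_3(x,y)$ equal $\frac{k-1}{2}$, so some $y_1,y_2\in\Gamma_4(x)$ are at distance $4$. Hence $\Gamma(y_1)\sqcup\Gamma(y_2)=\Gamma_3(x)$, and any third $z\in\Gamma_4(x)$ would satisfy $k\le 2\mu=k-1$. The $k=7$ cases fall to a count of $\sum\binom{n_i}{2}$. The $k=5$ cases fall to the classification of $(0,2)$-graphs in \cite{Brouwer-2006}.

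Your target configuration is also mis-stated. By your own relation $k_3=k_4+k+1$, $k_4=1$ gives $k_3=k+2$, not $2k$. The configuration $k_3=2k$ with all $c_3(x,y)=\frac{k+1}{2}$ is precisely the case $k_4=k-1$ that must be excluded. The correct cells are $\{x\}$, $\Gamma(x)$, $\Gamma_2(x)=\Gamma_2(x')$, $\Gamma_3(x)\setminus\Gamma(x')$, $\Gamma(x')$, $\{x'\}$, of sizes $1,k,2k,2,k,1$.

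Finally, in the non-bipartite reduction, your claim that the antipode map must be the swap is unjustified. Antipodes are at even distance, so the antipode map is never the swap. The clean argument is as follows. If $d_\Gamma(x,y)=4$, then $d_{\tilde\Gamma}(x^+,y^-)$ is odd and at least $4$, so $\tilde\Gamma$ has diameter $5$. By Theorem~\ref{diameter5}, $\tilde\Gamma$ is then the $5$-cube, so $v=16$. But $1+k+2k=16$ already for $k=5$, so $\Gamma$ would have diameter $2$, a contradiction.
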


\proof
Suppose $\Gamma$ is not bipartite. Then, by Lemmas~\ref{lambda=0} and~\ref{bipartite-double}, the bipartite double $\Gamma'$ of $\Gamma$ is a connected amply regular graph with diameter $d(\Gamma') = 4$ or $5$ and parameters $\left(2v, k, 0, \frac{k - 1}{2}\right)$. If $d(\Gamma') = 5$, then by Theorem~\ref{diameter5}, $\Gamma'$ must be the 5-cube. However, the 5-cube is not the bipartite double of any amply regular graph with diameter $4$ and parameters $(16, 5, 0, 2)$, a contradiction (see~\cite[Table 2]{Brouwer-2006}).
Assume that $d(\Gamma') = 4$. Since $d(\Gamma) = 4$, there exist vertices $x, y \in V(\Gamma)$ such that $d_{\Gamma}(x, y) = 4$. Clearly, $d_{\Gamma'}(x^+, y^-) \geq 4$, which must be odd—a contradiction. Thus, $\Gamma$ must be bipartite.

From now on, assume that $\Gamma$ is a connected bipartite amply regular graph with diameter $d = 4$ and parameters $\left(v, k, 0, \frac{k - 1}{2}\right)$, where $k \geq 5$ is odd.

Fix a vertex $x \in V(\Gamma)$, and define $k_i = |\Gamma_i(x)| = |\{ y \in V(\Gamma) \mid d_\Gamma(x, y) = i \}|$ for $0 \leq i \leq 4$. Let $\mathcal{B} = \{\Gamma_0(x), \Gamma_1(x), \Gamma_2(x), \Gamma_3(x), \Gamma_4(x)\}$ be a partition of $V(\Gamma)$. For convenience, set $k_4 = \alpha$. Since $\Gamma$ is a bipartite regular graph, we have $k_0 + k_2 + k_4 = k_1 + k_3$, and hence $k_3 = k_2 + \alpha + 1 - k$. This yields the distribution diagram shown in Figure~\ref{fig:D42}.

\begin{figure}[ht]
    \centering
    \includegraphics[width=\textwidth]{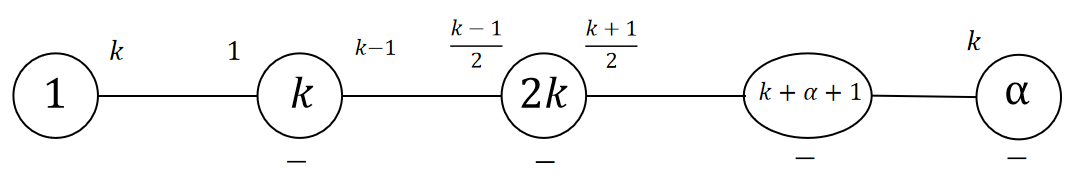} 
    \caption{The distribution diagram}
    \label{fig:D42}
\end{figure}

For each $y \in \Gamma_3(x)$, by~\cite[Proposition~1.9.1]{BCN}, we have $c_3(x,y) \geq c_2 + 1 = \frac{k + 1}{2}$. Since $k_3 = k + \alpha + 1 \leq k_2 = 2k$, it follows that $\alpha \leq k - 1$. We now prove that $\alpha = 1$ unless $k = 5$ and $\Gamma \cong \K_2 \square \Lambda$.

Suppose $\alpha \geq 2$. Let $X = \Gamma_3(x)$ and $Y = \Gamma_4(x)$. By Lemma~\ref{mubound},

\begin{align}
\label{eqd4-1}
\frac{k - 1}{2} = \mu &\geq \frac{k}{k + \alpha + 1} \left( k - \frac{\alpha + 1}{\alpha - 1} \right) = \frac{k}{k + \alpha + 1} \left( k - \frac{2}{\alpha - 1} - 1 \right).
\end{align}

Therefore, 
\begin{subequations}
\begin{align*}
& (k - 1)(k + \alpha + 1) \geq 2k\left(k - \frac{2}{\alpha - 1} - 1\right) \geq 2k(k - 3), \\
\implies\quad & (\alpha + 1)(k - 1) \geq 2k(k - 3) - k(k - 1) = k(k - 5), \\
\implies\quad & \alpha + 1 \geq k - 4, \\
\implies\quad & \alpha \geq k - 5.
\end{align*}
\end{subequations}

Set $\alpha = k - i$ for $1 \leq i \leq 5$. Then inequality~\eqref{eqd4-1} becomes

\begin{subequations}
\begin{align*}
& \frac{k - 1}{2} \geq \frac{k}{2k + 1 - i} \left( k - \frac{2}{k - i - 1} - 1 \right), \\
\iff\quad & (k - 1)(2k + 1 - i) \geq 2k\left( k - \frac{2}{k - i - 1} - 1 \right), \\
\iff\quad & \frac{4k}{k - i - 1} + i - 1 \geq (i - 1)k.
\end{align*}
\end{subequations}

By direct calculation, we obtain all possible solutions as follows:
\begin{itemize}
\item[(1)] $i = 1$, $k \geq 5$, $\alpha = k - 1$,
\item[(2)] $i = 2$, $k = 5$ or $7$, $\alpha = k - 2$,
\item[(3)] $i = 3$, $k = 5$, $\alpha = k - 3$,
\item[(4)] $i = 5$, $k = 7$, $\alpha = k - 5$.
\end{itemize}

We first eliminate Case~(1). Otherwise, the distribution diagram would be as in Figure~\ref{fig:D43}.

\begin{figure}[ht]
    \centering
    \includegraphics[width=\textwidth]{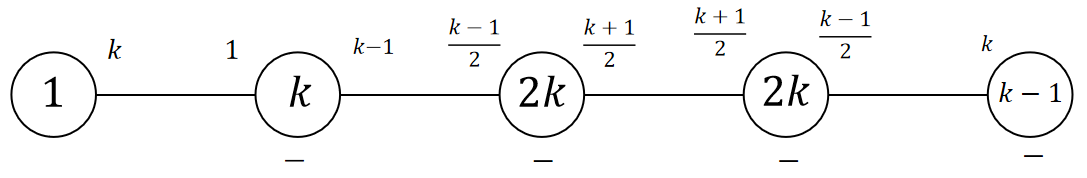} 
    \caption{The distribution diagram}
    \label{fig:D43}
\end{figure}

Let $X = \Gamma_3(x)$ and $Y = \Gamma_4(x)$. By Lemma~\ref{equalitycase}, since $(k - 2) \cdot \frac{k - 1}{2} > k \left( \frac{k - 1}{2} - 1 \right)$, there exist $y_1, y_2 \in \Gamma_4(x)$ such that $d(y_1, y_2) = 4$. So $\Gamma(y_1) \cup \Gamma(y_2) = \Gamma_3(x)$ and $\Gamma(y_1) \cap \Gamma(y_2) = \emptyset$. For $z \in \Gamma_4(x) \setminus \{y_1, y_2\}$, we have $\Gamma(z) \subseteq \Gamma(z, y_1) \cup \Gamma(z, y_2)$, hence $k = |\Gamma(z)| \leq |\Gamma(z, y_1)| + |\Gamma(z, y_2)| = 2\mu = k - 1$, a contradiction.

If $k = 5$, then by~\cite[Table 1]{Brouwer-2006}, $\Gamma$ is either the bipartite double of the icosahedron or $\K_2 \square \Lambda$.  
It is straightforward to verify that the bipartite double of the icosahedron satisfies $\alpha = 1$.

Consider $k = 7$. 
Now, each pair of vertices in $\Gamma_4(x)$ is at distance $2$ and hence shares $\mu=3$ common neighbors in $\Gamma_3(x)$.
Hence $k_3 = 8 + \alpha \geq (k - \mu) + k = 11$, implying $\alpha \geq 3$, and thus $\alpha \neq 2$.

If $\alpha = 5$, the corresponding distribution diagram is shown in Figure~\ref{fig:D44}.

\begin{figure}[ht]
    \centering
    \includegraphics[width=\textwidth]{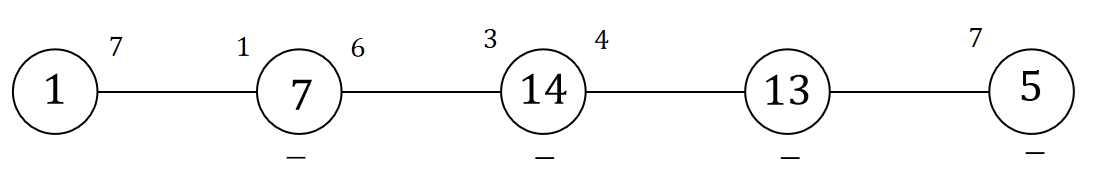} 
    \caption{The distribution diagram }
    \label{fig:D44}
\end{figure}

Let $\Gamma_3(x) = \{y_1, y_2, \ldots, y_{13}\}$ and define $n_i = |\Gamma(y_i) \cap \Gamma_4(x)|$ for $1 \leq i \leq 13$. By~\cite[Proposition~1.9.1]{BCN}, for all $y \in \Gamma_3(x)$, $c_3(x, y) \geq c_2 + 1 = 4$, hence $n_i \leq 3$. Since $\sum_{i=1}^{13} n_i = 35 = 3 \cdot 13 - 4$, and every pair $z_1, z_2 \in \Gamma_4(x)$ is at distance 2, the number
\[
|W| = \left|\left\{(y, z_1, z_2) \mid y \in \Gamma_3(x),\ z_1, z_2 \in \Gamma_4(x),\ y \sim z_1,\ y \sim z_2 \right\}\right| = \sum_{i=1}^{13} \binom{n_i}{2}
\]
equals $\binom{5}{2} \cdot \mu = 30$. Therefore, at most nine of the $n_i$ equal 3. Since $\sum_{i=1}^{13} n_i = 3 \cdot 13 - 4 = 35$, exactly nine of the $n_i$  equal 3. In that case,
\[
\sum_{i=1}^{13} \binom{n_i}{2} = 9 \cdot \binom{3}{2} + 4 \cdot \binom{2}{2} = 31,
\]
a contradiction.

Thus, $\alpha = 1$ always holds, except when $k = 5$ and $\Gamma \cong \K_2 \square \Lambda$.  
In the case $\alpha = 1$, it is straightforward to verify that $\Gamma$ admits an equitable partition whose distribution diagram is shown in Figure~\ref{fig:D41}. Since the above argument applies to every vertex $x \in V(\Gamma)$, it follows that $\Gamma$ is $Q$-regular. This completes the proof of Theorem~\ref{diameter4}.
\qed

We are now ready to prove Theorem~\ref{ARmain}.

\medskip
\noindent {\bf Proof of Theorem~\ref{ARmain}.}
By Lemma~\ref{lambda=0}, we have $\lambda = 0$ and $d \in \{4, 5\}$.  
If $d = 5$, then Theorem~\ref{diameter5} implies that $\Gamma$ is the $5$-cube.  
If $d = 4$, then by Theorem~\ref{diameter4}, $\Gamma$ is either isomorphic to $\K_2 \square \Lambda$ or is a bipartite $Q$-regular graph with distribution diagram shown in Figure~\ref{fig:D41}.
For the latter case, Theorem~\ref{thm:equivalent} shows that $\Gamma$ is the point--block incidence graph of a $GDDDP\left(2, k+1;\, k;\, 0, \frac{k-1}{2}\right)$, and hence satisfies item~(3) of Theorem~\ref{ARmain}.

This completes the proof of Theorem~\ref{ARmain}.
\qed

\vspace{0.4cm}

At the end of this section, we present a characterization of connected amply regular graphs with diameter $d \geq 2$ and parameters $k=7$ and $\mu=3$, which is the case we need to consider in~\cite{JKL-2025-2}.

\begin{cor}\label{valency7}
Let $\Gamma$ be a connected amply regular graph with diameter $d \geq 2$ and parameters $k = 7$ and $\mu = 3$. Then $\Gamma$ has diameter $d = 4$ and parameters $(32, 7, 0, 3)$.
\end{cor}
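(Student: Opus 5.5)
The natural route is to split by diameter. The case $d\ge 4$ reduces to Theorem~\ref{ARmain}, the case $d=2$ is excluded by strongly regular feasibility conditions, and $d=3$ is where the difficulty lies.

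\textbf{The case $d\ge 4$.} Here $k=7$ is odd and $k\ge 5$, with $\mu=3=\frac{k-1}{2}$, so Theorem~\ref{ARmain} applies. The $5$-cube has valency $5$, and $\K_2\square\Lambda$ has valency $5$, so both are excluded. Hence $\Gamma$ is the point--block incidence graph of a $GDDDP(2,8;7;0,3)$. By Theorem~\ref{thm:equivalent} (equivalently, the distribution diagram of Figure~\ref{fig:D41} from Theorem~\ref{diameter4}), such a graph has diameter $4$, $\lambda=0$, and order $4k+4=32$. This gives parameters $(32,7,0,3)$.

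\textbf{The case $d=2$.} Then $\Gamma$ is strongly regular with parameters $(v,7,\lambda,3)$. Counting edges between $\Gamma(x)$ and $\Gamma_2(x)$ gives $|\Gamma_2(x)|=7(6-\lambda)/3$. This forces $\lambda\in\{0,3,6\}$, and $\lambda=6$ would make $\Gamma$ complete.
\begin{itemize}
\item For $(15,7,3,3)$ the nontrivial eigenvalues are $\pm 2$. The multiplicities would have to satisfy $f+g=14$ and $7+2f-2g=0$, which has no integer solution.
\item For $(22,7,0,3)$ the eigenvalues are $1$ and $-4$. The multiplicity equations give $5g=28$, again impossible.
\end{itemize}
So $d\neq 2$.

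\textbf{The case $d=3$, and why the statement appears to need $d\ge 4$.} The only tool available is Lemma~\ref{b1geqk}, which gives $b_1\ge 8/3$ and hence $\lambda\le 3$. Integrality of $|\Gamma_2(x)|=7(6-\lambda)/3$ then leaves $\lambda\in\{0,3\}$.
\begin{itemize}
\item For $\lambda=3$, the local graph $\Gamma(x)$ would be $3$-regular on $7$ vertices, which is impossible.
\item For $\lambda=0$, I would pass to the bipartite case as in the proof of Theorem~\ref{diameter4}. If $\Gamma$ is not bipartite, Lemma~\ref{bipartite-double} gives a bipartite double of diameter $3$ or at least $4$. In the latter case the double has order $32$ by the previous paragraph, so $v=16<1+7+14$, a contradiction.
\item If $\Gamma$ is bipartite with $d=3$, counting parts gives $k_3=8$, and edge counting gives $c_3=7$. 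So $\Gamma$ would be the incidence graph of a symmetric $2$-$(15,7,3)$ design.
\end{itemize}
That last case is not a contradiction: such a design exists, namely the points of $PG(3,2)$ against the complements of its planes. Its incidence graph seems to be a genuine example with $d=3$, $k=7$, $\mu=3$, and $v=30$.

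This is the step I expect to be the obstacle. As far as I can see, the statement is only provable under the hypothesis $d\ge 4$, or else with this incidence graph (and possibly other graphs arising in the non-bipartite $d=3$ branch) listed as an exception. I would therefore prove the corollary with $d\ge 4$ as the hypothesis, including the $d=2$ exclusion for completeness. I would flag the diameter-$3$ incidence graph of the $2$-$(15,7,3)$ design as an apparent counterexample to the statement as written.
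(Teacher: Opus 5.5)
Your analysis is right, and it exposes an error in the paper's proof rather than a gap in yours. In the bipartite $d=3$, $\lambda=0$ case the paper derives exactly your intersection array $\{7,6,4;1,3,7\}$. It then asserts that the intersection matrix $L$ has smallest eigenvalue $-2$, and invokes \cite[Theorem~3.12.2]{BCN} to conclude that $\Gamma$ is a line graph.

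That assertion is false. We have $\det(xI-L)=x^4-53x^2+196=(x^2-49)(x^2-4)$, so the eigenvalues are $\pm7,\pm2$. This is forced anyway, since every bipartite $7$-regular graph has eigenvalue $-7$. So the line-graph argument never applies.

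The array is in fact realized. The points and planes of $\mathrm{PG}(3,2)$ form a symmetric $2$-$(15,7,3)$ design: two points span a line, which lies in exactly $3$ planes, and two planes meet in a line of $3$ points. Its incidence graph is a connected amply regular graph with parameters $(30,7,0,3)$ and diameter $3$. Hence the corollary as stated is false.

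The rest of your argument follows the paper's route: $d\ge4$ via Theorem~\ref{ARmain}, and $\lambda>0$ in diameter $3$ excluded by divisibility and parity. Your eigenvalue-multiplicity check for $d=2$ replaces the paper's table citation and treats $(15,7,3,3)$ explicitly.

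Two corrections are needed.

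\textbf{Your named example is off.} The complements of the planes of $\mathrm{PG}(3,2)$ have $8$ points each and give a $2$-$(15,8,4)$ design. Its incidence graph is $8$-regular with $\mu=4$. You want the planes themselves.

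\textbf{You need not hedge about the non-bipartite $d=3$ branch.} Suppose the bipartite double $\Gamma'$ has diameter $3$. Then your bipartite count applies to $\Gamma'$ and gives $|V(\Gamma')|=30$, so $v=15<1+7+14+1$, a contradiction. The paper's appeal to ``$d(\Gamma')\neq3$'' can be repaired the same way.

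So instead of adding the hypothesis $d\ge4$, you can prove the corrected dichotomy. Either $d=4$ and $\Gamma$ has parameters $(32,7,0,3)$, or $d=3$ and $\Gamma$ is the incidence graph of a symmetric $2$-$(15,7,3)$ design, with parameters $(30,7,0,3)$.
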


\proof
By Theorem~\ref{ARmain}, the diameter $d$ of $\Gamma$ is either $2$, $3$, or $4$. Furthermore, if $d = 4$, then $\Gamma$ admits an equitable partition whose distribution diagram is shown in Figure~\ref{fig:BB1}. In particular, $\Gamma$ has parameters $(32, 7, 0, 3)$.

\begin{figure}[ht]
    \centering
    \includegraphics[width=\textwidth]{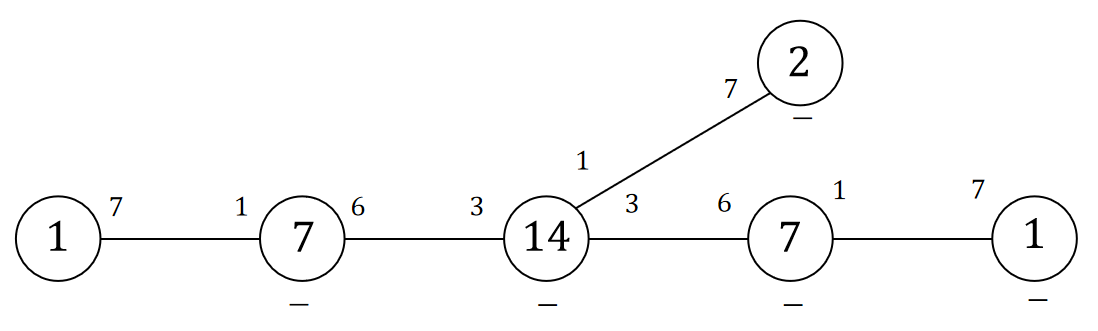} 
    \caption{The distribution diagram}
    \label{fig:BB1}
\end{figure}

Let $x \in V(\Gamma)$ and set $k_1 = |\Gamma_1(x)|$ and $k_2 = |\Gamma_2(x)|$.

Assume that $d = 2$. Then $\Gamma$ is a strongly regular graph. By counting edges between $\Gamma_1(x)$ and $\Gamma_2(x)$, we obtain $7b_1 = k_2c_2 = 3k_2$, so $\frac{b_1}{3}\in \mathbb{Z}$, implying $b_1 = 3$ or $6$. 

If $b_1 = 3$, then $k_2 = 7$ and $a_1 = 3$, so $\Gamma$ has parameters $(15, 7, 3, 3)$. 
If $b_1 = 6$, then $k_2 = 14$ and $a_1 = 0$, so $\Gamma$ has parameters $(22, 7, 0, 3)$. However, by~\cite[Table~10.1]{GR}, no such strongly regular graph exists.  
Therefore, $d \neq 2$.

Assume that $\Gamma$ has diameter $d=3$ and parameter $\lambda>0$. 
As each vertex in $\Gamma_1(x)$ has $\lambda$ neighbours in $\Gamma_1(x)$ and $|\Gamma_1(x)|$ is odd, it follows that $\lambda$ is even. 
Therefore $\lambda=2$ or $4$, and hence $b_1=4$ or $2$. 
On the other hand, by double counting the edges between $\Gamma_1(x)$ and $\Gamma_2(x)$ we obtain 
$k_1 b_1 = k_2 \mu$. 
Since $k_1=7$ and $\mu=3$, this gives $7b_1 = 3k_2$, and hence $\frac{b_1}{3}\in \mathbb{Z}$, a contradiction.

Now assume that $\Gamma$ has diameter $d=3$ and $\lambda=0$. 
Then $\Gamma$ is an amply regular graph with parameters $(v,7,0,3)$. Fix a vertex $x \in V(\Gamma)$ and consider the distance partition $\mathcal{B} = \{ \Gamma_0(x), \Gamma_1(x), \Gamma_2(x), \Gamma_3(x) \}$. 
By counting edges between $\Gamma_1(x)$ and $\Gamma_2(x)$, we find $|\Gamma_2(x)| = 14$. Hence, $|V(\Gamma)| \geq 1 + 7 + 14 + |\Gamma_3(x)| \geq 23$.

Suppose $\Gamma$ is bipartite. Then the partition $\mathcal{B}$ is clearly equitable, and the distribution diagram of $\Gamma$ with respect to $\mathcal{B}$ is as shown in Figure~\ref{fig:BB2}.

\begin{figure}[ht]
    \centering
    \includegraphics[scale=0.8]{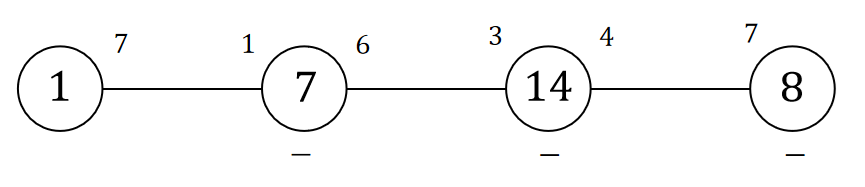} 
    \caption{The distribution diagram }
    \label{fig:BB2}
\end{figure}

Therefore, $\Gamma$ is a distance-regular graph with intersection matrix
\[
L =
\begin{bmatrix}
0 & 7 & 0 & 0 \\
1 & 0 & 6 & 0 \\
0 & 3 & 0 & 4 \\
0 & 0 & 7 & 0
\end{bmatrix}.
\]
In particular, the smallest eigenvalue of $\Gamma$ is $-2$.

By {\cite[Theorem~3.12.2]{BCN}}, $\Gamma$ is the line graph of a regular or bipartite semiregular connected graph $\Delta$. Since $\Gamma$ is bipartite, it contains no triangles. Therefore, every vertex of $\Delta$ must have valency $2$. This implies that $\Delta$ is a cycle, contradicting the assumption on $\Gamma$.

Suppose instead that $\Gamma$ is not bipartite. Then by Lemma~\ref{bipartite-double}, its bipartite double $\Gamma'$ is a connected amply regular graph with parameters $(2v, 7, 0, 3)$. From the previous discussion, we know that $d(\Gamma') \neq 3$, and hence $d(\Gamma') \geq 4$. Again, by earlier arguments, $\Gamma'$ must have parameters $(32, 7, 0, 3)$, implying $v = 16$, a contradiction.

The above discussion shows that $d \neq 3$.

This completes the proof of Corollary~\ref{valency7}.
\qed

\section*{Acknowledgements}
Wei Jin is supported by NSFC (12271524, 12331013) and the NSF of Jiangxi \\ (20224ACB201002). Jack H. Koolen is supported by NSFC (12471335) and the Anhui Initiative in Quantum Information Technologies (AHY150000).

\end{document}